        \theoremstyle{plain}
        \newtheorem{lemma}{Lemma}[section]
        \newtheorem{proposition}{Proposition}[section]
        \newtheorem{theo}{Theorem}[section]
        \theoremstyle{remark}
        \newtheorem{remark}{\bf Remark}[section]
        \theoremstyle{remark}
        \theoremstyle{remark}
\newcommand{\demi}{\frac{1}{2}}
\newcommand{\R}{{\mathbb{R}}}
\newcommand{\Z}{{\mathbb{Z}}}
\newcommand{\dt}{\partial_t}
\newcommand{\dx}{\partial_x}
\newcommand{\eps}{\varepsilon}
\newcommand{\av}[1]{\left[ #1 \right]}
\newcommand{\Dx}{\Delta x}
\newcommand{\Dt}{\Delta t}
\newcommand{\tn}{t_n}
\newcommand{\tnp}{t_{n+1}}
\newcommand{\x}{x_i}
\newcommand{\xp}{x_{i+1}}
\newcommand{\xpd}{x_{i+\demi}}
\newcommand{\half}{\frac{1}{2}}
\newcommand{\thalf}{\frac{3}{2}}
\newcommand{\Dm}{D^-}
\newcommand{\Dp}{D^+}
\newcommand{\Dc}{D^c}
\newcommand{\Dnot}{D^0}
\newcommand{\dnot}{\delta^0}
\newcommand{\sumi}{\sum_{i\in\Z}}
\newcommand{\rhon}{\rho^n}
\newcommand{\rhoni}{\rho^n_{i}}
\newcommand{\rhonim}{\rho^n_{i-1}}
\newcommand{\rhonip}{\rho^n_{i+1}}
\newcommand{\rhonp}{\rho^{n+1}}
\newcommand{\rhonpi}{\rho^{n+1}_{i}}
\newcommand{\gn}{g^n}
\newcommand{\gni}{g^n_{i+\half}}
\newcommand{\gnim}{g^n_{i-\half}}
\newcommand{\gnip}{g^n_{i+\frac{3}{2}}}
\newcommand{\gnp}{g^{n+1}}
\newcommand{\gnpi}{g^{n+1}_{i+\half}}
\newcommand{\gnpim}{g^{n+1}_{i-\half}}
\newcommand{\trhon}{\tilde{\rho}^n}
\newcommand{\trhoni}{\tilde{\rho}^n_{i}}
\newcommand{\trhonp}{\tilde{\rho}^{n+1}}
\newcommand{\trhonpi}{\tilde{\rho}^{n+1}_{i}}
\newcommand{\tgn}{\tilde{g}^n}
\newcommand{\tgni}{\tilde{g}^n_{i+\half}}
\newcommand{\tgnp}{\tilde{g}^{n+1}}
\newcommand{\tgnpi}{\tilde{g}^{n+1}_{i+\half}}
\newcommand{\anpi}{a^{n}_i}
\newcommand{\ani}{a^{n}_i}
\newcommand{\an}{a^{n}}
\newcommand{\anp}{a^{n}}
\newcommand{\bni}{b^{n}_i}
\newcommand{\bn}{b^{n}}
\newcommand{\rnorm}[1]{\|#1\|}
\newcommand{\gnorm}[1]{\||#1\||}
\newcommand{\innerp}[2]{\left\langle#1 \, , \, #2\right\rangle}
\newcommand{\tsigma}{\tilde{\sigma}}
\begin{document}

\begin{flushright}
{\it draft, for submission to SINUM, \today}
\end{flushright}

\begin{center}

{\bf Analysis of an asymptotic preserving scheme for linear kinetic
  equations in the diffusion limit\\}

\vspace{1cm}

Jian-Guo Liu\footnote{Department of Physics and Department of Mathematics, Duke University,
Durham, NC 27708, USA  ({\bf \tt  Jian-Guo.Liu@duke.edu})},
Luc Mieussens\footnote{Institut de Math\'ematiques de Bordeaux (UMR
  5251), Universit\'e de Bordeaux, 351, cours de la Lib\'eration, 33405 Talence cedex, France ({\bf
  \tt Luc.Mieussens@math.u-bordeaux1.fr})}

\end{center}

\bigskip

{\bf Abstract. } We present a mathematical analysis of the asymptotic preserving scheme
proposed in [M. Lemou and L. Mieussens, {\em SIAM J. Sci. Comput.},
31, pp. 334--368, 2008] for linear transport equations in kinetic and
diffusive regimes. We prove that
the scheme is uniformly stable and accurate with respect to the mean free path of
the particles. This property is satisfied under an explicitly given
CFL condition. This condition tends to a parabolic CFL condition for
small mean free paths, and is close to a convection CFL condition for large mean free paths. Our analysis is
based on very simple energy estimates.

{\bf Key words.} transport equations, diffusion limit, asymptotic preserving schemes, stiff terms, stability analysis\\

{\bf AMS subject classifications.} 65M06, 35B25, 82C80, 41A60 \\

                        \section{Introduction}

Particle systems are often described at the
microscopic level by kinetic models (neutron
transport, radiative transfer, electrons in semi-conductors, or rarefied gas
dynamics). Simulating such systems by using kinetic models can be computationally very
expensive, but modern super computers now enable realistic
simulations. When the mean free path of the particles is very small as
compared to the (macroscopic) size of the computational domain, the kinetic model
can be very well approximated by a much simpler macroscopic model
(diffusion equation, Rosseland approximation, Euler and Navier-Stokes equations), that can be numerically solved
much faster.

However, there are many cases where the ratio mean free
path/macroscopic size (the so-called ``Knudsen number'' in rarefied
gas dynamics, denoted by $\eps$ in this paper) is not constant: depending on the geometry of
the boundaries, or on the boundary conditions, this ratio may vary in
time, and in space. In such multiscale situations, usual kinetic
solvers are often useless: for stability and accuracy reasons, they
must resolve the microscopic scales, which is computationally to
expensive in ``fluid'' zones (where $\eps$ is small). By contrast,
macroscopic solvers are faster but may be inaccurate in ``kinetic''
zones (where $\eps$ is large).

This is why many people have been working for more than 20 years on a
kind of multiscale kinetic schemes: the asymptotic-preserving (AP)
schemes. Such schemes are uniformly stable with respect to $\eps$
(thus their computational complexity does not depend on $\eps$), and
are consistent with the macroscopic model when $\eps$ goes to $0$ (the
limit of the scheme is a scheme for the macroscopic model). 

Up to our knowledge, AP schemes have first been studied (for steady
problems) in neutron transport by Larsen, Morel and
Miller~\cite{LMM1987}, Larsen and Morel~\cite{LM1989}, and then by Jin
and Levermore~\cite{JL1991,JL1993}. For unstationary problems, the
difficulty is the time stiffness due to the collision operator. To
avoid the use of expensive fully implicit schemes, two
classes of semi-implicit time discretizations have been proposed by
Klar~\cite{klar_sinum_1998} and Jin, Pareschi and
Toscani~\cite{JPT2000} (see preliminary works
in~\cite{JPT1998,jin_sisc_1999} and extensions
in~\cite{JP2000b,JP2000,NP1998,klar_sinum_1999,klar_sisc_1999}. Similar
ideas have also been used by Buet et al. in~\cite{BCLM}.

In~\cite{LM2007}, Lemou and Mieussens have proposed a new AP scheme
based on the micro-macro decomposition of the distribution function into microscopic and
macroscopic components (similar schemes have also been
proposed by Klar and
Schmeiser~\cite{KS2001}, and more recently by Carrillo et al.~\cite{CGL2008,CGLV2008}). A coupled
system of equations is obtained for these two components without any
linearity assumption. The decomposition only uses basic properties of
the collision operator that are common to most of kinetic equations
(namely conservation and equilibrium properties). Then this system is
solved with a suitable time semi-implicit discretization and space finite
differences on staggered grids. While almost all the schemes
mentioned before are based on very similar ideas, the approach
proposed in~\cite{LM2007} has been shown to be very general, since
it applies to kinetic equations for both diffusion and
hydrodynamic regimes (for the diffusion regime, see~\cite{LM2007} for linear
transport equations and~\cite{BLM2009} for the non-linear Kac
equation, for the hydrodynamic regime, see~\cite{BLM2008} for the
Boltzmann equation). We also mention the work of Degond, Liu and
Mieussens~\cite{DLM2006} who proposed a similar approach (micro-macro
decomposition) to design macroscopic diffusion models with kinetic
upscalings: this approach also leads to AP schemes, at least for a
semi-discrete time discretization (no AP space discretization was
studied in this paper). 

While many different schemes have been proposed in the past few
years, it appears that the rigorous proof of their AP property looks
rather difficult and is seldom investigated. Up to our knowledge,
there are only two papers on this subject. Klar and
Unterreiter~\cite{KU2002} have proved that a scheme similar to that
of~\cite{klar_sinum_1998} and~\cite{JPT2000} for the linear transport
equation is uniformly stable. However, their proof is based on a
von Neumann analysis, and hence is restricted to a one
dimensional equation with constant coefficients in a periodic
domain. Gosse and Toscani have proposed in~\cite{GT2004} an AP
scheme based on a rather different idea (discretization of the
collision term as a non-conservative product and use of well-balanced Godunov
schemes): they have been able to prove a uniform stability
property, still in the linear case, but also a strong positivity property. However, their scheme is
based on techniques (like approximation of the steady solution) that are difficult to generalize to other equations.

In this paper, we propose a very simple stability proof for the AP
scheme of~\cite{LM2007} and we exhibit an explicit CFL condition. This
condition is uniform with respect to $\eps$, and gives a standard
parabolic CFL condition $\Dt=O(\Dx^2)$ when $\eps$ goes to 0.  While a
stability property was already proved in~\cite{LM2007} for this
scheme, this was only for a simple two-velocity model (telegraph
equation), by using von Neumann analysis. Here, our proof is based on
energy estimates that are more general than von Neumann analysis, and
hence is valid for one-dimensional linear equations with non-constant
coefficients, continuous velocity variable, in the whole space. By the
same technique, we are also able to prove uniform error estimates. 

Our paper is organized as follows. In section~\ref{sec:scheme}, we
introduce a general linear equation, we present its discretization by
the AP scheme, and we give the main features of this scheme. Then, in
section~\ref{sec:stab}, we give our main stability result and its
proof. The error estimates are given in section~\ref{sec:acc}.

                        \section{An AP scheme for the linear transport equation}
\label{sec:scheme}

Linear transport equation is a model for the evolution of particles in
some medium (neutron transport, linear radiative transfer,
\ldots). Generally, this model reads, in scaled variables 
\begin{equation*}
\eps \dt \phi + \Omega\cdot  \nabla_{r} \phi =
\frac{\sigma}{\eps}L\phi -\eps\sigma_A  \phi + \eps S,  
\end{equation*}
where $\phi(t,r,\Omega)$ is the number density of particles in the
position-direction phase space that depends on time $t$, position $r=(x,y,z)\in \R^3$,
and angular direction of propagation of particles $\Omega\in S^2$.
Moreover, $\sigma$ is the total cross section, $\sigma_A$ is the
absorption cross section, and $S$ is an internal source of particles,
which is independent of $\Omega$. The linear operator $L$ models the
scattering of the particles by the medium and acts only on the angular
dependence of $\phi$. This simple model does not allow for particles
of possibly different energy (or frequency); it is called
``one-group'' or ``monoenergetic'' equation. The parameter $\eps$ is a
scale factor that measures the ratio between a typical microscopic
length (the mean free path of a particle, for instance) to a typical
macroscopic length (the size of the computational domain, for
instance). See~\cite{LMM1987} for details.

In this paper, we consider this one-group equation in the slab
geometry: we assume that $\phi$  depends only on the slab axis
variable $x\in \R$. Then it can be shown that the average of $\phi$
with respect to the $(y,z)$ cosine directions of $\Omega$, denoted by
$f(t,x,v)$, satisfies the one-dimensional equation
\begin{equation}  \label{eq-onegroup}
\eps \dt f + v \dx f = \frac{\sigma}{\eps}Lf -\eps\sigma_A  f + \eps
S,
\end{equation}
where $v\in[-1,1]$ is the $x$ cosine direction of $\Omega$. At $t=0$, we have
the initial data $f(0,x,v)=f^0(x,v)$. We assume that the
cross sections satisfy the inequalities $0<\sigma_m\leq \sigma(x)\leq
\sigma_M$ and $0\leq \sigma_A(x)\leq \sigma_{AM}$ for every $x$. We do
not consider boundary conditions in this paper, and hence $x\in\R$.

The linear operator
$L$ is given by
\begin{equation}  \label{eq-defL}
Lf(v)=\int_{-1}^1s(v,v')(f(v')-f(v))\, dv',
\end{equation}
where the kernel $s$ is such that $0<s_m\leq s(v,v')\leq s_M$ for
every $v,v'$ in $[-1,1]$. We also assume that $s$ satisfies $\int_{-1}^1s(v,v')\,dv'=1$, and
that it is symmetric: $s(v,v')=s(v,v')$.
For the sequel, it is useful to define the operator $\av{.}$ such
that $\av{\phi}=\half\int_{-1}^1\phi(v)\, dv$ is the average of every
velocity dependent function $\phi$. With these assumptions, it is standard~\cite{BSS} to state the
following properties of $L$: 
\begin{proposition} \label{prop:L}
  \begin{itemize}
 \item $\av{L\phi}=0$ for every $\phi$ in $L^2([-1,1])$
  \item the null space of $L$ is ${\cal N}(L)=\lbrace \phi
    =\av{\phi}\rbrace$ (constant functions)
  \item the rank of $L$ is ${\cal R}(L)={\cal N}^\perp(L)=\lbrace \phi
    \text{ s.t } \av{\phi}=0\rbrace$
  \item $L$ is non-positive self-adjoint in $L^2([-1,1])$ and we have
\begin{equation}  \label{eq-sadjoint}
\av{\phi L\phi}\leq -2 s_m \av{\phi^2}
\end{equation}
for every $\phi\in{\cal N}^\perp(L)$
\item $L$ admits a pseudo-inverse from ${\cal N}^\perp(L)$ onto
  ${\cal N}^\perp(L)$, denoted by $L^{-1}$
\item the orthogonal projection from $L^2([-1,1])$ onto ${\cal
    N}^\perp(L)$ is $\av{.}$
  \end{itemize}
\end{proposition}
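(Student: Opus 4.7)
The unifying idea is to rewrite the quadratic form $\av{\psi L\phi}$ in a manifestly symmetric, sign-definite way by using the assumed symmetry $s(v,v')=s(v',v)$ and the normalization $\int_{-1}^1 s(v,v')\,dv'=1$. Starting from the definition of $L$ and applying Fubini together with the $v\leftrightarrow v'$ swap, I expect to obtain the identity
\begin{equation*}
\av{\psi L\phi}=-\tfrac{1}{4}\int_{-1}^{1}\!\!\int_{-1}^{1}s(v,v')\bigl(\psi(v)-\psi(v')\bigr)\bigl(\phi(v)-\phi(v')\bigr)\,dv\,dv'.
\end{equation*}
This single identity is the workhorse: its symmetry in $(\phi,\psi)$ gives self-adjointness of $L$, and taking $\psi\equiv 1$ (a constant test function makes the $\psi$-difference vanish) gives $\av{L\phi}=0$, which is the first bullet.

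With the identity at hand, the null space and the coercivity bound are immediate. Setting $\psi=\phi$ yields $\av{\phi L\phi}=-\tfrac{1}{4}\iint s(v,v')(\phi(v)-\phi(v'))^2\,dv\,dv'\le 0$, with equality only when $\phi(v)=\phi(v')$ a.e., so $\mathcal{N}(L)$ is exactly the constants. To upgrade this to the spectral gap \eqref{eq-sadjoint}, I replace $s$ by its lower bound $s_m$ and expand the square: the cross term reduces to $\av{\phi}^2$ (up to constants), and a direct bookkeeping gives $\av{\phi L\phi}\le -2s_m(\av{\phi^2}-\av{\phi}^2)$, which on $\mathcal{N}^\perp(L)=\{\av{\phi}=0\}$ is precisely the stated inequality.

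For the range characterization, the cleanest route is to write $L=K-I$, where $Kf(v)=\int_{-1}^{1}s(v,v')f(v')\,dv'$ is a Hilbert--Schmidt (hence compact) operator on $L^2([-1,1])$ thanks to the uniform bound $s\le s_M$; the identity $\int s(v,v')\,dv'=1$ is what produces the $-I$ piece. Then $L$ is a compact perturbation of $-I$, hence Fredholm of index $0$. Combined with self-adjointness, the Fredholm alternative gives $\mathcal{R}(L)=\mathcal{N}(L)^\perp=\{\av{\phi}=0\}$, and the restriction of $L$ to $\mathcal{N}^\perp(L)$ is a bijection onto itself by the coercivity \eqref{eq-sadjoint}, so its inverse $L^{-1}$ is well defined and bounded by $1/(2s_m)$. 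Finally, the last bullet is the standard orthogonal decomposition $\phi=\av{\phi}+(\phi-\av{\phi})$ into the $1$-dimensional subspace of constants and its orthogonal complement $\{\av{\cdot}=0\}$; I read the statement in the paper as identifying this projection (on the constants $\mathcal{N}(L)$, with the complementary projector being $I-\av{\cdot}$).

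The only step that requires any real thought is the range characterization, since one has to know (or reprove) the Fredholm alternative; everything else is bookkeeping on the symmetrized double integral. No step is genuinely hard, which is consistent with the paper describing these facts as standard and citing \cite{BSS}.
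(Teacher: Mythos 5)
Your proof is correct, and in fact the paper offers nothing to compare it with: Proposition~\ref{prop:L} is stated as ``standard'' with a citation to \cite{BSS} and no argument is given. Your symmetrization identity is the right workhorse, and the constant comes out exactly: bounding $s$ below by $s_m$ gives $\av{\phi L\phi}\leq -\tfrac{s_m}{4}\iint(\phi(v)-\phi(v'))^2\,dv\,dv' = -2s_m\bigl(\av{\phi^2}-\av{\phi}^2\bigr)$, which is \eqref{eq-sadjoint} on $\{\av{\phi}=0\}$ (and is sharp for $s\equiv s_m=\tfrac12$, $\phi(v)=v$). Two small remarks. First, the Fredholm detour is not needed: since $\av{L\phi}=0$, the restriction of $-L$ to the closed subspace $\mathcal{N}^\perp(L)$ is a bounded self-adjoint operator that is coercive by \eqref{eq-sadjoint}, so Lax--Milgram (or the bound $2s_m\|\phi\|\leq\|L\phi\|$ plus self-adjointness, which forces the closed range to be all of $\mathcal{N}^\perp(L)$) already yields the bijection and $\|L^{-1}\|\leq 1/(2s_m)$; your compact-perturbation argument is equally valid, just heavier. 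Second, you are right that the last bullet of the proposition is misstated: $\av{\cdot}$ is the orthogonal projection onto $\mathcal{N}(L)$ (the constants), and $I-\av{\cdot}$ is the projection onto $\mathcal{N}^\perp(L)$; this is the reading consistent with the way $(I-\av{\cdot})$ is used in \eqref{eq-mMg} and the scheme.
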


When $\eps$ becomes small (the ``diffusion'' regime), it is well known that the solution $f$
of~(\ref{eq-onegroup}) tends to its own
average density $\rho=\av{f}$, which is a solution of the asymptotic
diffusion limit
\begin{equation}  \label{eq-diff}
\dt \rho -\dx \kappa  \dx\rho =-\sigma_A\rho +S,
\end{equation}
where the diffusion coefficient is
$\kappa(x)=-\frac{\av{vL^{-1}v}}{\sigma(x)}$. An asymptotic preserving
scheme for the linear kinetic equation~(\ref{eq-onegroup}) is a
numerical scheme that discretizes~(\ref{eq-onegroup}) in such a way
that it leads to a correct discretization of the diffusion
limit~(\ref{eq-diff}) when $\eps$ is small.

Now, we summarize the results obtained in~\cite{LM2007}. By using the
micro-macro decomposition $f=\rho+\eps g$, where $\rho=\av{f}$ and $g$
is such that $\av{g}=0$, we
derived the micro-macro model for~(\ref{eq-onegroup}) that reads
\begin{subequations} \label{eq-mM} 
\begin{align}
& \dt \rho + \dx\av{vg} = -\sigma_A\rho +S ,\label{eq-mMrho}  \\
& \dt g + \frac{1}{\eps} (I-\av{.}) (v\dx g) =
-\frac{\sigma}{\eps^2}Lg 
- \frac{1}{\eps^2}v \dx\rho,\label{eq-mMg} 
\end{align}
\end{subequations}
with initial data $\rho^0=\av{f^0}$ and $\eps g^0=f^0-\rho^0$. 
This system if formally equivalent to~(\ref{eq-onegroup}). 

Then, we
proposed the following numerical scheme for this system. We choose a
time step $\Dt$ and times $t_n=n\Dt$, and two staggered grids of step
$\Dx$ and nodes $x_i=i\Dx$ and $x_{i+\half}=(i+\half)\Dx.$ We use the
approximated values $\rhoni\approx\rho(t_n,x_i)$ and
$\gni(v)\approx g(t_n,x_{i+\half},v)$, and the scheme reads
\begin{subequations} \label{eq-scheme} 
\begin{align}
& \frac{\rhonpi-\rhoni}{\Dt} +
\av{v\frac{\gnpi-\gnpim}{\Dx}} = -\sigma_{A,i}\rhonpi+S_i,
 \label{eq-schrho}  \\
& \frac{\gnpi-\gni}{\Dt} + \frac{1}{\eps\Dx} (I-\av{.})
\left(v^+(\gni-\gnim)+v^-(\gnip-\gni)\right)
\label{eq-schg}  \\
& \qquad\qquad =
- \frac{\sigma_{i+\half}}{\eps^2} L\gnpi -
\frac{1}{\eps^2}v\frac{\rhonip-\rhoni}{\Dx}, \nonumber
\end{align} 
\end{subequations}
where $v^\pm=\frac{v\pm|v|}{2}$. Note that as in the continuous case,
this scheme preserves the zero average of $g$: 
\begin{proposition}
If the initial data $g^0$ satisfies $\av{g^0_{i+\half}}=0$ for every $i$, then for every $n$ and $i$, we have
\begin{equation}  \label{eq-avg}
\av{\gni}=0.
\end{equation}
\end{proposition}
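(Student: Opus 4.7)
The plan is to prove this by straightforward induction on $n$, applying the averaging operator $\av{.}$ to the micro equation~(\ref{eq-schg}) and checking that every term vanishes term-by-term.

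The base case $n=0$ is exactly the assumption $\av{g^0_{i+\half}}=0$. For the inductive step, I would assume $\av{\gni}=0$ for all $i$, and then average~(\ref{eq-schg}) in $v$. The four terms to examine are: (i) the time-difference term, which by linearity and the inductive hypothesis reduces to $\av{\gnpi}/\Dt$; (ii) the upwind transport term; (iii) the collision term; and (iv) the source term proportional to $v(\rhonip-\rhoni)/\Dx$.

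For term~(ii), the key observation is that for any function $\psi(v)$, one has $\av{(I-\av{.})\psi}=\av{\psi}-\av{\psi}=0$, so the entire transport contribution vanishes identically (independently of the induction). For term~(iii), Proposition~\ref{prop:L} gives $\av{L\phi}=0$ for every $\phi$, so $\av{L\gnpi}=0$. For term~(iv), one has $\av{v}=\tdemi\int_{-1}^1 v\,dv=0$, so this term also vanishes. Putting these together, the averaged equation collapses to $\av{\gnpi}/\Dt=0$, which gives the conclusion.

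There is no real obstacle here: the whole point of the micro-macro decomposition is that the micro equation lives in $\Nm^\perp(L)$, and the scheme is designed precisely so that the discrete right-hand side of~(\ref{eq-schg}) remains in $\Nm^\perp(L)$ at each step. The only subtlety worth highlighting is the presence of the projector $(I-\av{.})$ in front of the upwind transport, which is exactly what guarantees that term~(ii) averages to zero; without that projector, the split fluxes $v^\pm$ would produce a nonzero mean and the zero-average property would be lost.
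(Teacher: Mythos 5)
Your proof is correct and follows essentially the same route as the paper: apply $\av{.}$ to~(\ref{eq-schg}) and observe that the projector $(I-\av{.})$, the identity $\av{L\phi}=0$, and $\av{v}=0$ kill every term except the time difference, so that $\av{\gnpi}=\av{\gni}$ and the conclusion follows from the initial condition. Your explicit induction and the remark about the role of $(I-\av{.})$ are just a slightly more detailed write-up of the paper's one-line argument.
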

\begin{proof}
  Apply the average operator $\av{.}$ to~(\ref{eq-schg}): since we
  have $\av{I-\av{.}}=0$ (obvious), $\av{L}=0$
  (proposition~\ref{prop:L}), and $\av{v}=0$ (obvious), this relation
  yields $\av{\gnpi}-\av{\gni}=0$, which gives the result.
\end{proof}

In scheme~(\ref{eq-scheme}), the upwind discretization of
$(I-\av{.}) (v\dx g)$ is to insure stability in the kinetic
regime, while the centered approximations of $\dx\av{vg}$ and $v
\dx\rho$ are to capture the diffusion limit. Indeed, it is clear
that when $\eps$ goes to 0, we have from~(\ref{eq-schg})
$\gnpi=-\frac{1}{\sigma_{i+\half}}L^{-1}\left(v\frac{\rhonip-\rhoni}{\Dx}\right)+O(\eps)$.
Consequently, the flux of $\gnpi$ is
\begin{equation*}
  \av{v\gnpi}=-\frac{1}{\sigma_{i+\half}}\av{vL^{-1}v}\frac{\rhonip-\rhoni}{\Dx}+O(\eps),
\end{equation*}
where we have used the facts that $\rhoni$ does not depend on $v$ and
that $L$---and hence $L^{-1}$ too---only applies to functions of $v$.
Then, using this relation in~(\ref{eq-schrho}), we get
\begin{equation}  \label{eq-sdiff}
\frac{\rhonpi-\rhoni}{\Dt} -\frac{1}{\Dx}\left(
  \kappa_{i+\half}\frac{\rhonip-\rhoni}{\Dx} 
-   \kappa_{i-\half}\frac{\rhoni-\rhonim}{\Dx} 
  \right)
=-\sigma_{A,i}\rhoni + S_i,
\end{equation}
with $\kappa_{i+\half}=-\frac{\av{vL^{-1}v}}{\sigma_{i+\half}}$, which
is the usual 3-points stencil explicit scheme for the diffusion
equation~(\ref{eq-diff}).

Of course, this property is true only if
$\Dt$ can be chosen independently of $\eps$, or in other words, if the
scheme is uniformly stable with respect to $\eps$. In~\cite{LM2007},
we have proved that this scheme is indeed uniformly stable under some
CFL condition, in the simpler case of the telegraph equation (in which
we have only two discrete velocities $v=\pm 1$, and $s\equiv1$,
$\sigma\equiv 1$, $\sigma_A=S\equiv 0$). 
In the following section, we extend this result to the general
equation~(\ref{eq-onegroup}) for scheme~(\ref{eq-scheme}).

\section{Uniform stability}
\label{sec:stab}

We give our main result of stability for
scheme~(\ref{eq-scheme}). Without lost of generality, we assume $S=0$
(source free case).
\begin{theo} \label{theo:stab}
If $\Dt$ satisfies the following CFL condition
\begin{equation}  \label{eq-CFL}
\Dt\leq  \frac{1}{3}\left(\tsigma\Dx^2+2\eps\Dx\right),
\end{equation}
with $\tsigma=2s_m\sigma_m$, then the sequences $\rhon$ and
$\gn$ defined by scheme~(\ref{eq-scheme}) satisfy the energy estimate
\begin{equation*}
\sumi\left(\rhoni\right)^2\Dx + \eps^2 \sumi\av{\left(\gni\right)^2}\Dx \leq\sumi\left(\rho^0_i\right)^2\Dx + \eps^2 \sumi\av{\left(g^0_{i+\half}\right)^2}\Dx
\end{equation*}
for every $n$, and hence the scheme~(\ref{eq-scheme}) is stable.
\end{theo}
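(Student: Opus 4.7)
The argument will be a direct energy estimate on the two components $\rhon$ and $\gn$, mimicking the continuous identity obtained by multiplying (\ref{eq-mMrho}) by $\rho$ and (\ref{eq-mMg}) by $\eps^2 g$. I would multiply (\ref{eq-schrho}) by $\Dx\,\rhoni$ and sum over $i$, and multiply (\ref{eq-schg}) by $\Dx\,\eps^2\gnpi$, average in $v$, and sum over $i$. The choice of test function $\rhoni$ (rather than $\rhonpi$) is crucial: a discrete summation by parts on the staggered grid gives $\sumi\rhoni\av{v(\gnpi-\gnpim)}=-\sumi(\rhonip-\rhoni)\av{v\gnpi}$, which exactly cancels the coupling $-\sumi\av{v\gnpi}(\rhonip-\rhoni)$ arising from the $v\,\dx\rho$ term of the summed $g$-equation. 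Using the identities $2a(b-a)=(b^2-a^2)-(b-a)^2$ and $2b(b-a)=(b^2-a^2)+(b-a)^2$, and invoking (\ref{eq-avg}) together with (\ref{eq-sadjoint}) so that the collision term contributes a dissipation $\tsigma\Dx\sumi\av{(\gnpi)^2}$, one reaches an inequality of the form
\begin{equation*}
\frac{E^{n+1}-E^n}{2\Dt}+\frac{\eps^2}{2\Dt}\sumi\Dx\av{(\gnpi-\gni)^2}+\tsigma\Dx\sumi\av{(\gnpi)^2}+\eps T\leq\frac{1}{2\Dt}\sumi\Dx(\rhonpi-\rhoni)^2,
\end{equation*}
where $E^n$ is the energy of the theorem and $T$ is the upwind transport contribution from the $g$-equation.

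The remaining task is to absorb the bad right-hand side and the indefinite $\eps T$ into the three good quantities on the left, under the CFL (\ref{eq-CFL}). For the transport term, (\ref{eq-avg}) lets me drop the projector $I-\av{\cdot}$. I would then split $\gnpi=\gni+(\gnpi-\gni)$: the pure-$\gn$ piece yields, by the classical upwind telescoping on the $v^\pm$ parts and an index shift, the non-negative quantity $\frac{\eps}{2}\sumi\av{|v|(\gni-\gnim)^2}$, while the correction piece $(\gnpi-\gni)(\gni-\gnim)$ (and its $v^-$ analogue) is controlled by Young's inequality, producing contributions absorbable into $\frac{\eps^2}{2\Dt}\sumi\Dx\av{(\gnpi-\gni)^2}$ and into the upwind dissipation just mentioned. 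For the $\rho$-residual I would use (\ref{eq-schrho}) directly to write $\rhonpi-\rhoni=-\frac{\Dt}{\Dx}\av{v(\gnpi-\gnpim)}$ (the absorption term being handled separately), then Cauchy--Schwarz in $v$ with $\av{v^2}=\demi\int_{-1}^{1}v^2\,dv=\frac{1}{3}$ gives $(\rhonpi-\rhoni)^2\leq\frac{\Dt^2}{3\Dx^2}\av{(\gnpi-\gnpim)^2}$. Decomposing $\gnpi-\gnpim=(\gni-\gnim)+(\gnpi-\gni)-(\gnpim-\gnim)$ and index-shifting reduces this to a linear combination again absorbable into the good terms.

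The main difficulty lies in the bookkeeping of constants. Once the bad contributions are rewritten as above, the requirement $E^{n+1}\leq E^n$ reduces to a linear inequality in $\Dt/\Dx^2$ (coming from the parabolic $(\rhonpi-\rhoni)^2$ residual balanced against the collision dissipation proportional to $\tsigma$) and in $\Dt/(\eps\Dx)$ (coming from the Young balance between the upwind cross terms and the time-increment dissipation $\frac{\eps^2}{2\Dt}\av{(\gnpi-\gni)^2}$). Tuning the Young parameter optimally and carefully tracking the factor $1/3$ inherited from $\av{v^2}$ should produce precisely the additive CFL $\Dt\leq\frac{1}{3}(\tsigma\Dx^2+2\eps\Dx)$ of (\ref{eq-CFL}); iterating $E^{n+1}\leq E^n$ in $n$ then yields the stated estimate.
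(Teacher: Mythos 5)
Your overall strategy (discrete energy method, cancellation of the $\rho$--$g$ coupling, upwind dissipation via the centered form of the upwind operator, Young inequalities, absorption into the collision dissipation $\tsigma\gnorm{\gnp}^2$) is the right one, and several of your ingredients are exactly those of the paper: the identity $\rhonpi-\rhoni=-\Dt\,\av{v\Dnot\gnp_i}$, the bound $\av{v\phi}^2\leq\tfrac12\av{|v|\phi^2}$, and the use of $\av{\gni}=0$ to drop the projector. But two structural choices you make --- testing the $\rho$-equation with $\rhoni$ instead of $\rhonpi$, and splitting the test function $\gnp=\gn+(\gnp-\gn)$ in the transport term rather than splitting its argument $\gn=\gnp+(\gn-\gnp)$ --- create a time-level mismatch that your absorption argument cannot repair. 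With your choices, the upwind dissipation you generate is $\frac{\eps\Dx}{2}\sumi\av{|v|(\Dp\gni)^2}\Dx$, i.e.\ at level $n$, and the Young remainder of the transport cross term, $\frac{\Dt}{2}\gnorm{|v|\Dp\gn}^2$, is also at level $n$; meanwhile the only large, $\eps$-free dissipation available, $\tsigma\gnorm{\gnp}^2$, lives at level $n+1$, as does the $\rho$-residual $\frac{1}{2\Dt}\rnorm{\rhonp-\rhon}^2=\frac{\Dt}{2}\sumi\av{v\Dnot\gnp_i}^2\Dx$. Absorbing the level-$n$ remainder into the level-$n$ upwind dissipation forces $\Dt\leq\eps\Dx$; transferring anything between levels through the increment term $\frac{\eps^2}{2\Dt}\gnorm{\gnp-\gn}^2$ (which, moreover, you have already fully spent with the Young parameter $\alpha=\frac{\eps}{2\Dt}$) costs a bound of the form $\frac{\Dt}{\Dx^2}\lesssim\frac{\eps^2}{\Dt}$, i.e.\ again $\Dt\lesssim\eps\Dx$; and bounding a level-$n$ leftover by $\frac{C}{\Dx^2}\gnorm{\gn}^2$ gives a growth factor $1+O(\Dt^2/(\eps^2\Dx^2))$ that blows up as $\eps\to0$. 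In every branch you end up with $\Dt\leq\min(C\tsigma\Dx^2,\,C'\eps\Dx)$ rather than the additive condition~(\ref{eq-CFL}); the minimum degenerates as $\eps\to0$, so the uniform stability claim does not follow from your setup.

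The repair is precisely the paper's arrangement, which keeps every $\Dp g$ term at level $n+1$: multiply~(\ref{eq-schrho}) by $\rhonpi$ (so $\rnorm{\rhonp-\rhon}^2$ appears as a \emph{good} term, later traded via Young with $\alpha=\frac{1}{2\Dt}$ against the residual coupling $\sumi\av{v\Dnot\gnp_i}(\rhoni-\rhonpi)\Dx$), and in the transport inner product write $\gn=\gnp+(\gn-\gnp)$, so that the upwind dissipation is $\frac{\eps\Dx}{2}\sumi\av{|v|(\Dp\gnpi)^2}\Dx$ and the Young remainder is $\frac{\Dt}{2}\gnorm{|v|\Dp\gnp}^2$. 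Then all three $\Dp\gnp$ contributions carry the same weight $\av{|v|(\Dp\gnpi)^2}$ at the same time level, their coefficients literally add to $\frac{3\Dt}{4}-\frac{\eps\Dx}{2}$, and only the positive part of this factor needs to be absorbed into $\tsigma\gnorm{\gnp}^2$ via $\sumi(\Dp\phi_{i+\half})^2\Dx\leq\frac{4}{\Dx^2}\sumi\phi_{i+\half}^2\Dx$; this is exactly what produces the additive CFL $\Dt\leq\frac13(\tsigma\Dx^2+2\eps\Dx)$. Your observation that testing with $\rhoni$ makes the coupling cancel exactly is correct and elegant, but the sign it forces on $\rnorm{\rhonp-\rhon}^2$ is what ultimately sinks the bookkeeping.
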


Note that CFL condition~(\ref{eq-CFL}) can be viewed as an average
of a diffusive CFL condition $\Dt\leq\tsigma\Dx^2$ (needed for the diffusion
scheme~(\ref{eq-sdiff})) and of a convection CFL $\Dt\leq \eps\Dx$. It shows that the scheme is stable
uniformly in $\eps$, that is to say a diffusive CFL condition
$\Dt\leq C\Dx^2$ is sufficient for stability for small
$\eps$, while a convection CFL is sufficient for
$\eps=O(1)$. 

Moreover, while our CFL condition~(\ref{eq-CFL}) is valid for every
$\eps$, we point out that it is not optimal for each $\eps$. This
might be the price to be paid for getting a uniform condition. For
instance, in the simpler case of constant cross sections
($\sigma\equiv\sigma_m$)  and kernel ($s=s_m\equiv
\half$), the diffusion coefficient of the diffusion
equation~(\ref{eq-sdiff}) is $\kappa=\frac{1}{3\sigma}$, and the
optimal CFL condition for scheme~(\ref{eq-sdiff}) is $\Dt\leq
\frac{\Dx^2}{2\kappa}$. However, CFL condition~(\ref{eq-CFL}) for our
AP scheme reads (for $\eps=0$) $\Dt\leq
\frac{2}{9}\frac{\Dx^2}{2\kappa}$, which is $0.2$ as small as the
optimal CFL.

\begin{remark}
  If we have a source term $S\neq 0$, then the same CFL condition naturally
  gives a linear growth of the energy. Since this is standard and
  does not lead to any additional difficulty, we do not consider this
  case here.
\end{remark}

\begin{remark} \label{rem:CFLabsorption}
  In the case of a time explicit discretization of the absorption term (that is to say when
$-\sigma_{A,i}\rhonpi$ is replaced by $-\sigma_{A,i}\rhoni$
in~(\ref{eq-scheme}), we have the same result with a CFL condition
which is a
bit more complicated. Namely, the scheme is stable if
\begin{equation}  \label{eq-CFLabsoption}
\Dt\leq \min \left( \frac{2}{1+\sigma_{A,M}},\frac{3}{3+\sigma_{A,M}}\Dt_S  \right),
\end{equation}
where $\Dt_S$ is the maximum time step allowed by CFL
condition~(\ref{eq-CFL}) for the scheme with implicit (or zero)
absorption term.
\end{remark}

This theorem is proved in the following sections: section~\ref{subsec:lemma} contains compact
notations and useful lemma, and section~\ref{subsec:ener} contains the derivation of our energy estimate.

   \subsection{Notations and useful lemma}
   \label{subsec:lemma}

We give some useful notations for norms and inner products that are
used in our analysis. For every grid function $\mu=(\mu_i)_{i\in\Z}$
we define: 
\begin{equation}  \label{eq-rnorm}
\rnorm{\mu}^2=\sumi\mu_i^2\Dx.
\end{equation}
For every velocity dependent grid function
$v\in[-1,1]\mapsto\phi(v)=(\phi_{i+\half}(v))_{i\in\Z}$, we define:
\begin{equation}  \label{eq-gnorm}
\gnorm{\phi}=\sumi\av{\phi_{i+\half}^2}\Dx.
\end{equation}
If $\phi$ and $\psi$ are two velocity dependent grid functions, we
define their inner product:
\begin{equation}  \label{eq-inner}
\innerp{\phi}{\psi}=\sumi\av{\phi_{i+\half}\psi_{i+\half}}\Dx.
\end{equation}

Now we give some notations for the finite difference operators that
are used in scheme~(\ref{eq-scheme}). For every grid function
$\phi=(\phi_{i+\half})_{i\in\Z}$, we define the following one-sided operators: 
\begin{equation} \label{eq-diff_onesided}
 \Dm \phi_{i+\half} = \frac{\phi_{i+\half}-\phi_{i-\half}}{\Dx} 
\quad \text{ and }\quad  \Dp \phi_{i+\half} = \frac{\phi_{i+\thalf}-\phi_{i+\half}}{\Dx}
\end{equation}
We also define the following centered operators:
\begin{equation} \label{eq-diff_centered}
\Dc \phi_{i+\half} = \frac{\phi_{i+\thalf}-\phi_{i-\half}}{2\Dx}
\quad \text{ and }\quad  \Dnot \phi_i=\frac{\phi_{i+\half}-\phi_{i-\half}}{\Dx} (=\Dm\phi_{i+\half}).
\end{equation}
Finally, for every grid function $\mu=(\mu_{i})_{i\in\Z}$, we define
the following centered operator:
\begin{equation} \label{eq-diff_centered2}
 \dnot \mu_{i+\half}=\frac{\mu_{i+1}-\mu_{i}}{\Dx}.
\end{equation}

\begin{lemma}[Centered form of the upwind operator] \label{lemma:5}
  For every grid function
$\phi=(\phi_{i+\half})_{i\in\Z}$, we have:
\begin{equation*}
  \left(v^+\Dm + v^-\Dp \right)\phi_{i+\half}=v\Dc\phi_{i+\half} -\frac{\Dx}{2}|v|\Dm\Dp\phi_{i+\half}.
\end{equation*}
\end{lemma}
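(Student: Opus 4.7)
The plan is a direct algebraic verification, exploiting the identities $v^+ + v^- = v$ and $v^+ - v^- = |v|$ that follow immediately from the definition $v^\pm = (v\pm|v|)/2$.

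First I would substitute these identities into the left-hand side, grouping by $v$ and by $|v|$:
\begin{equation*}
v^+ \Dm \phi_{i+\half} + v^- \Dp \phi_{i+\half} = \frac{v}{2}\bigl(\Dm + \Dp\bigr)\phi_{i+\half} + \frac{|v|}{2}\bigl(\Dm - \Dp\bigr)\phi_{i+\half}.
\end{equation*}
By the definition~(\ref{eq-diff_centered}) of $\Dc$, the first term is exactly $v\Dc\phi_{i+\half}$, since $\Dc=\tdemi(\Dm+\Dp)$ (both expressions equal $(\phi_{i+\thalf}-\phi_{i-\half})/(2\Dx)$).

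Second, I would handle the remaining $|v|$ term by observing the discrete identity
\begin{equation*}
(\Dp - \Dm)\phi_{i+\half} = \frac{\phi_{i+\thalf}-2\phi_{i+\half}+\phi_{i-\half}}{\Dx} = \Dx\,\Dm\Dp\,\phi_{i+\half},
\end{equation*}
which is just the standard rewriting of the centered second difference. Substituting yields $-\tdemi|v|\cdot\Dx\,\Dm\Dp\phi_{i+\half}$, and combining the two contributions gives exactly the claimed formula.

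There is no real obstacle here: the identity is a purely algebraic/combinatorial fact about one-sided and centered finite differences, and the only thing to be careful about is bookkeeping the signs when splitting $v^\pm$. The lemma is the discrete analogue of the familiar statement that upwinding equals centered differencing plus a numerical viscosity term of size $\tdemi|v|\Dx$, and its role in the sequel will presumably be to isolate this dissipative $\Dm\Dp$ contribution when deriving the energy estimate in Section~\ref{subsec:ener}.
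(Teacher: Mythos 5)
Your proof is correct and follows exactly the route the paper sketches: substitute $v^\pm=\frac{v\pm|v|}{2}$, use $\Dm+\Dp=2\Dc$ for the $v$ part, and identify $(\Dp-\Dm)\phi_{i+\half}=\Dx\,\Dm\Dp\phi_{i+\half}$ for the $|v|$ part. Your write-up just makes explicit the algebra the paper leaves to the reader; there is nothing to add or fix.
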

\begin{proof}
  This result is easily obtained by using the relations
  $v^\pm=\frac{v\pm|v|}{2}$ and the identity $\Dm+\Dp=2\Dc$.
\end{proof}

\begin{lemma} \label{lemma:estimDp}
  For every grid function $\phi=(\phi_{i+\half})_{i\in\Z}$, we have:
\begin{equation*}
\sumi{\left(\Dp\phi_{i+\half}\right)^2}\Dx \leq \frac{4}{\Dx^2}\sum_i\phi_{i+\half}^2\Dx.  
\end{equation*}
\end{lemma}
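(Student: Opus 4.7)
The plan is to bound the squared finite difference pointwise by an elementary inequality, then exploit translation invariance of the sum over $\Z$ to absorb the shifted index back. Concretely, by definition
$$\left(\Dp\phi_{i+\half}\right)^2 = \frac{(\phi_{i+\thalf}-\phi_{i+\half})^2}{\Dx^2},$$
and the elementary inequality $(a-b)^2\leq 2(a^2+b^2)$ immediately yields
$$\left(\Dp\phi_{i+\half}\right)^2 \leq \frac{2}{\Dx^2}\left(\phi_{i+\thalf}^2+\phi_{i+\half}^2\right).$$

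Next I would multiply by $\Dx$ and sum over $i\in\Z$. The key observation is that $\sumi\phi_{i+\thalf}^2 = \sumi\phi_{i+\half}^2$ because the sum is over all of $\Z$, so the index shift is harmless (here one tacitly assumes the grid function has finite $\ell^2$ norm, so that the shift of a summable sequence is unproblematic; otherwise both sides are infinite and the inequality holds trivially). Combining the two shifted copies then yields $\frac{4}{\Dx^2}\sumi\phi_{i+\half}^2\Dx$, which is exactly the claimed bound.

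There is no real obstacle here: the lemma is a discrete analogue of the obvious fact that a forward difference has operator norm $2/\Dx$ on $\ell^2$. The only subtle point worth flagging is the re-indexing step, which relies on summation over all of $\Z$; if one were working on a bounded domain with boundary conditions, a boundary term would appear, but in the present whole-line setting this is immediate.
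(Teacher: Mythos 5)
Your proof is correct and follows the same route as the paper's: expand the forward difference, apply the elementary inequality $(a-b)^2\leq 2(a^2+b^2)$, and re-index the shifted sum over $\Z$. The remarks on finiteness of the $\ell^2$ norm and the absence of boundary terms are sensible but not needed beyond what the paper already does.
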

\begin{proof}
  Expand the left-hand side, use the inequality $(a+b)^2\leq 2(a^2+b^2)$, and then
  use the change of index $i+\frac{3}{2}\rightarrow i+\half$.
\end{proof}

\begin{lemma}[Estimate for the adjoint upwind operator] \label{lemma:3}
  For every positive real number $\alpha$ and for every velocity dependent grid functions
$\phi$ and $\psi$, we have:
\begin{equation*}
\left|\innerp{\left(v^+\Dp + v^-\Dm \right)\psi}{\phi}\right| \leq \alpha
\gnorm{\phi}^2 + \frac{1}{4\alpha}\gnorm{|v|\Dp\psi}^2.  
\end{equation*}
\end{lemma}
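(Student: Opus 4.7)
The plan is to apply the weighted Young inequality $|xy|\leq \alpha x^2+\tfrac{y^2}{4\alpha}$ pointwise in $v$ and at each index $i$ inside the double sum/integral defining $\innerp{\cdot}{\cdot}$. This immediately yields
\begin{equation*}
\left|\innerp{(v^+\Dp + v^-\Dm)\psi}{\phi}\right| \leq \alpha \gnorm{\phi}^2 + \frac{1}{4\alpha}\gnorm{(v^+\Dp+v^-\Dm)\psi}^2,
\end{equation*}
so the lemma reduces to the norm identity $\gnorm{(v^+\Dp+v^-\Dm)\psi}^2 = \gnorm{|v|\Dp\psi}^2$.

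The key algebraic observation is that $v^+$ and $v^-$ have disjoint supports in $v$, so $v^+v^-=0$ and the cross term vanishes upon squaring:
\begin{equation*}
\bigl((v^+\Dp+v^-\Dm)\psi_{i+\half}\bigr)^2=(v^+)^2(\Dp\psi_{i+\half})^2+(v^-)^2(\Dm\psi_{i+\half})^2.
\end{equation*}
I then average in $v$ and sum over $i\in\Z$; in the sum coming from $\Dm$, the identity $\Dm\psi_{i+\half}=\Dp\psi_{i-\half}$ together with the index translation $i\mapsto i+1$ converts everything into $\Dp\psi_{i+\half}$. Combining with the elementary identity $(v^+)^2+(v^-)^2=v^2$ then gives
\begin{equation*}
\gnorm{(v^+\Dp+v^-\Dm)\psi}^2=\sumi\av{v^2(\Dp\psi_{i+\half})^2}\Dx=\gnorm{|v|\Dp\psi}^2,
\end{equation*}
which closes the argument.

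I do not anticipate any real obstacle: the whole proof rests on three elementary facts, namely the disjoint supports of $v^+$ and $v^-$ (which trivializes the square of the upwind expression), a harmless index translation in the infinite sum, and the weighted Young inequality. The only implicit point is that $\psi$ must have enough decay in $i$ for $\gnorm{\Dp\psi}$ to be finite, but this is already needed for the right-hand side of the estimate to be meaningful.
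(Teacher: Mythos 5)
Your proof is correct and follows essentially the same route as the paper: Young's inequality reduces the lemma to the identity $\gnorm{(v^+\Dp+v^-\Dm)\psi}^2=\gnorm{|v|\Dp\psi}^2$, which both you and the paper obtain from the disjoint supports of $v^\pm$ (you via $v^+v^-=0$ and $(v^+)^2+(v^-)^2=v^2$, the paper via splitting the velocity integral over $[-1,0]$ and $[0,1]$) together with the index shift turning $\Dm$ into $\Dp$. No gaps.
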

\begin{proof}
  From Young's inequality, we obtain for any positive real number $\alpha$:
\begin{equation}\label{eq-young} 
  \left|\innerp{\left(v^+\Dp + v^-\Dm \right)\psi}{\phi}\right| \leq \alpha
\gnorm{\phi}^2 + \frac{1}{4\alpha}\gnorm{\left(v^+\Dp + v^-\Dm \right)\psi}^2.
\end{equation}
Now, the second term of the right-hand side of this inequality can be written:
\begin{equation*}
  \frac{1}{4\alpha}\gnorm{\left(v^+\Dp + v^-\Dm \right)\psi}^2 
    = \frac{1}{4\alpha}\sumi
\half \left( 
   \int_{-1}^0 v^2(\Dm\psi_{i+\half})^2\, dv 
 + \int_{0}^1  v^2(\Dp\psi_{i+\half})^2\, dv 
\right)\Dx. 
\end{equation*}
Then, with a simple changing of index, $\Dm$ can be replaced
by $\Dp$ in the first integral, which gives:
\begin{equation*}
\begin{split}
  \frac{1}{4\alpha}\gnorm{\left(v^+\Dp + v^-\Dm \right)\psi}^2 
&    =  \frac{1}{4\alpha}\sumi
\half \int_{-1}^1 v^2(\Dp\psi_{i+\half})^2\, dv\Dx  \\
& = \frac{1}{4\alpha}\gnorm{|v|\Dp\psi}^2 .
\end{split}
\end{equation*} 
Finally, using this inequality in~(\ref{eq-young}) gives the result.
\end{proof}

\begin{lemma}[Discrete integration by parts] \label{lemma:intbp}
  For every grid functions
$\phi=(\phi_{i+\half})_{i\in\Z}$, $\psi=(\psi_{i+\half})_{i\in\Z}$,
and $\mu=(\mu_{i})_{i\in\Z}$, we have:
\begin{align*}
&  \sumi \mu_i\Dnot\phi_i\Dx  = -\sumi  \bigl(\dnot\mu_{i+\half}\bigr)\phi_{i+\half}\Dx, \\
&  \sumi \psi_{i+\half}\Dm\phi_{i+\half}\Dx  = -\sumi  \bigl(\Dp\psi_{i+\half}\bigr)\phi_{i+\half}\Dx, \\
&  \sumi \phi_{i+\half}\Dc\phi_{i+\half}\Dx  = 0.
\end{align*}
\end{lemma}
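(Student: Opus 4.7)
The plan is to prove each of the three identities by substituting the definitions from \eqref{eq-diff_onesided}--\eqref{eq-diff_centered2}, splitting the resulting sum into two telescoping halves, and then relabeling the index in one half so the two pieces can be combined. All three identities have the same underlying mechanism (a summation-by-parts argument on $\Z$), so I would treat them in parallel, noting once at the outset that the series are assumed to have sufficient decay at infinity (e.g.\ the grid functions lie in the discrete $\ell^2$ spaces induced by $\rnorm{\cdot}$ and $\gnorm{\cdot}$) for the rearrangements to be valid.

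For identity~(1), I would write $\mu_i\Dnot\phi_i\Dx=\mu_i(\phi_{i+\half}-\phi_{i-\half})$, split the sum $\sumi\mu_i\phi_{i+\half}-\sumi\mu_i\phi_{i-\half}$, and apply the shift $i\mapsto i+1$ to the second sum to bring it to the form $\sumi\mu_{i+1}\phi_{i+\half}$. The difference factors as $\sumi(\mu_i-\mu_{i+1})\phi_{i+\half}=-\sumi\dnot\mu_{i+\half}\,\Dx\,\phi_{i+\half}$, which is the desired right-hand side.

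For identity~(2), I would proceed analogously: expand $\psi_{i+\half}\Dm\phi_{i+\half}\Dx=\psi_{i+\half}(\phi_{i+\half}-\phi_{i-\half})$, split into two sums, and relabel the second via $i\mapsto i+1$ so that it becomes $\sumi\psi_{i+\frac{3}{2}}\phi_{i+\half}$. Factoring yields $-\sumi(\Dp\psi_{i+\half})\,\Dx\,\phi_{i+\half}$. For identity~(3), expand $\phi_{i+\half}\Dc\phi_{i+\half}\Dx=\tfrac{1}{2}\phi_{i+\half}(\phi_{i+\frac{3}{2}}-\phi_{i-\half})$, split into the two cross terms, and shift $i\mapsto i+1$ in the second term; the two sums become identical and cancel exactly, giving zero.

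There is essentially no hard step here: the only point requiring care is the justification of the index shift in an infinite sum, which is immediate once one assumes the grid functions belong to the relevant discrete $\ell^2$ spaces (so absolute summability via Cauchy--Schwarz allows the rearrangement). The identities are then purely algebraic consequences of telescoping, and they are precisely the discrete analogues of the continuous integration-by-parts formulas $\int\mu\,\partial_x\phi=-\int(\partial_x\mu)\phi$ and $\int\phi\,\partial_x\phi=0$ on $\R$, which will be used repeatedly in the energy estimate of section~\ref{subsec:ener}.
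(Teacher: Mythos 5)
Your proposal is correct and follows exactly the route the paper intends: expand the definitions \eqref{eq-diff_onesided}--\eqref{eq-diff_centered2}, split each sum, and shift the index $i\mapsto i+1$ in one half (the paper's proof is just the one-line remark that the results follow from ``obvious changing of indexes''). Your added observation that $\ell^2$ summability justifies the rearrangement of the infinite sums is a small but legitimate point of care that the paper leaves implicit.
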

\begin{proof}
These results are simply obtained by using obvious changing of indexes
and the definitions of the finite difference operators given in~(\ref{eq-diff_onesided}--\ref{eq-diff_centered2}).
\end{proof}

\begin{lemma} \label{lemma:2}
  If $g\in L^2([-1,1])$ then
\begin{equation*}
  \av{vg}^2\leq \half \av{|v|g^2}
\end{equation*}
\end{lemma}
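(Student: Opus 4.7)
The plan is to recognize this as a direct consequence of the Cauchy--Schwarz inequality applied with the weight $|v|$. Writing $v = \mathrm{sgn}(v)\sqrt{|v|}\cdot\sqrt{|v|}$, we can split the integrand $vg$ as $\bigl(\mathrm{sgn}(v)\sqrt{|v|}\bigr)\cdot\bigl(\sqrt{|v|}\,g\bigr)$, which is the natural factorization that makes one factor purely geometric (so its $L^2$ norm can be computed explicitly) and concentrates $g$ in the other factor weighted by $|v|$.

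Concretely, first I would unfold the definition of $\av{\cdot}$ to rewrite the inequality as
\begin{equation*}
\left(\tfrac{1}{2}\int_{-1}^{1} v\,g(v)\,dv\right)^{2} \;\leq\; \tfrac{1}{4}\int_{-1}^{1} |v|\,g(v)^{2}\,dv.
\end{equation*}
Then I would apply Cauchy--Schwarz to the product $\bigl(\mathrm{sgn}(v)\sqrt{|v|}\bigr)\bigl(\sqrt{|v|}\,g(v)\bigr)$ to obtain
\begin{equation*}
\left(\int_{-1}^{1} v\,g(v)\,dv\right)^{2} \;\leq\; \left(\int_{-1}^{1} |v|\,dv\right)\left(\int_{-1}^{1} |v|\,g(v)^{2}\,dv\right).
\end{equation*}
Finally I would compute $\int_{-1}^{1}|v|\,dv = 1$, which, after dividing both sides by $4$, yields exactly $\av{vg}^{2} \leq \tfrac{1}{2}\av{|v|g^{2}}$.

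There is no real obstacle here: the only small point to notice is the choice of weight $|v|$ in Cauchy--Schwarz (rather than the trivial split $1\cdot vg$, which would give the weaker bound $\av{vg}^{2}\leq \tfrac{2}{3}\av{g^{2}}$ and lose the factor $|v|$ needed later to control the upwind term via Lemma~\ref{lemma:3}). The constant $\tfrac{1}{2}$ on the right-hand side comes precisely from $\int_{-1}^{1}|v|\,dv = 1$ together with the two factors of $\tfrac{1}{2}$ in the definition of $\av{\cdot}$.
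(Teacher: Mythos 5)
Your proof is correct and coincides with the paper's own argument: both factor $vg$ as $\bigl(\mathrm{sign}(v)\sqrt{|v|}\bigr)\bigl(\sqrt{|v|}\,g\bigr)$, apply Cauchy--Schwarz, and use $\int_{-1}^{1}|v|\,dv=1$ to obtain the constant $\tfrac12$. Nothing is missing.
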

\begin{proof}
We just note that
\begin{equation*}
\begin{split}
  \av{vg}^2& = \frac{1}{4}\left(\int_{-1}^1vg \, dv\right)^2  =
  \frac{1}{4}\left(\int_{-1}^1 {\rm sign}(v)\sqrt{|v|}\sqrt{|v|}g \,
    dv\right)^2 \\
& \leq \frac{1}{4} \int_{-1}^1 \left({\rm sign}(v)\sqrt{|v|}\right)^2 \, dv 
                 \int_{-1}^1 \left(\sqrt{|v|}g\right)^2 \, dv \qquad \text{ by
                   Cauchy-Schwarz inequality} \\
& = \half \av{|v|g^2}.
\end{split}
\end{equation*}
\end{proof}

   \subsection{Energy estimates}
   \label{subsec:ener}
Since the time discretization of absorption term $-\sigma_A\rho$ is
implicit, it plays no role in the energy estimate. Consequently, to
simplify the proof, we consider the case where there is no absorption
(see remark~\ref{rem:absorption} at the end of this section). We proceed in five short steps.

\noindent {\it Step 1.} \\
Here we derive a first energy relation. With the finite difference operators defined
in~(\ref{eq-diff_onesided})--(\ref{eq-diff_centered2}), the scheme can
be written in the following compact form:
\begin{subequations}\label{eq-cscheme} 
  \begin{align}
& \frac{\rhonpi-\rhoni}{\Dt} + \Dnot\av{v\gnp_i} = 0 \label{eq-srho}
\\
& \frac{\gnpi-\gni}{\Dt} + \frac{1}{\eps} (I-\av{.})
\left(v^+\Dm+v^-\Dp\right)\gni = \frac{\sigma_{i+\half}}{\eps^2} L\gnpi 
- \frac{1}{\eps^2}v\,\dnot \rhon_{i+\half}.\label{eq-sg} 
  \end{align}
\end{subequations}

We define the energy of system~(\ref{eq-mM}) as $\int_\R \rho^2 \, dx +
\eps^2 \int \av{g^2} \, dx$. It is clear that the scheme can be proved
to be stable if the energy at time $n+1$ can be controlled by the
energy at time $n$. Consequently, we first multiply~(\ref{eq-srho}) by
$\rhonpi$, then we take the sum over $i\in\Z$, and finally, we use
the standard equality $a(a-b)=\half(a^2-b^2+|a-b|^2)$ to get: 
\begin{subequations}
\begin{equation}  \label{eq-Erho}
\frac{1}{2\Dt} \left(  \rnorm{\rhonp}^2-\rnorm{\rhon}^2+\rnorm{\rhonp-\rhon}^2\right)
+ \sumi \rhonpi\Dnot\av{v\gnp_i}\Dx = 0.
\end{equation}
Second, we multiply~(\ref{eq-sg}) by $\gnpi$, we take the velocity
average, we sum over $i\in \Z$, and we get:
\begin{equation}  \label{eq-Eg}
\begin{split}
& \frac{1}{2\Dt} \left( \gnorm{\gnp}^2-\gnorm{\gn}^2 + \gnorm{\gnp-\gn}^2  \right)
+ \frac{1}{\eps}\innerp{\gnp}{(I-\av{.})\left(v^+\Dm+v^-\Dp\right)\gn} \\
&  =  \frac{1}{\eps^2} \innerp{\gnp}{\sigma L\gnp} - \frac{1}{\eps^2}\sumi \av{v\gnpi}\dnot \rhon_{i+\half}\Dx.
\end{split}
\end{equation}
\end{subequations}
Now we use relation~(\ref{eq-avg}): since $\av{\gnpi}=0$ for every
$i$, a simple expansion of the inner product of the left-hand side of~(\ref{eq-Eg}) shows
that it can be reduced to:
\begin{equation}  \label{eq-Innp}
\begin{split}
& \innerp{\gnp}{(I-\av{.})\left(v^+\Dm+v^-\Dp\right)\gn}   \\
& =  \innerp{\gnp}{\left(v^+\Dm+v^-\Dp\right)\gn} -  \sumi \av{\gnpi}\av{\left(v^+\Dm+v^-\Dp\right)\gni}\Dx \\
& = \innerp{\gnp}{\left(v^+\Dm+v^-\Dp\right)\gn}.
\end{split}
\end{equation}
Moreover, we can use relation~(\ref{eq-sadjoint}) and the assumptions
of the cross section and the kernel to estimate the inner product of
the right-hand side of~(\ref{eq-Eg}) as follows:
\begin{equation}\label{eq-gsLg}
\begin{split}
\innerp{\gnp}{\sigma L\gnp}  & = \sumi \sigma_{i+\half}\av{\gnpi L\gnpi}\Dx \\
& \leq -\underbrace{2s_m\sigma_m}_{\tsigma} \gnorm{\gnp}^2 .
\end{split}
\end{equation}

Consequently, we add up~(\ref{eq-Erho}) and $\eps^2$
times~(\ref{eq-Eg}), then we use~(\ref{eq-Innp}),~(\ref{eq-gsLg}), and
the discrete integration by parts of lemma~\ref{lemma:intbp} to get our preliminary energy estimate:
\begin{equation}  \label{eq-ener}
\begin{split}
& \frac{1}{2\Dt} \left(  \rnorm{\rhonp}^2-\rnorm{\rhon}^2+\rnorm{\rhonp-\rhon}^2\right)
+ \sumi \rhonpi\Dnot\av{v\gnp_i}\Dx \\
& + \frac{\eps^2}{2\Dt} \left( \gnorm{\gnp}^2-\gnorm{\gn}^2 +
  \gnorm{\gnp-\gn}^2  \right) + \eps
\innerp{\gnp}{\left(v^+\Dm+v^-\Dp\right)\gn} \\
& \leq  -\tsigma\gnorm{\gnp}^2 + \sumi \av{v\Dnot\gnp_i} \rhoni\Dx.
\end{split}
\end{equation}

\bigskip
\noindent {\it Step 2.} \\
In this step, we show how the $\rhonp-\rhon$ term can be eliminated
in~(\ref{eq-ener}). First, it is useful to write $\rhoni$ in the
right-hand side of~(\ref{eq-ener}) as $(\rhoni-\rhonpi)+\rhonpi$:
indeed, the terms $\sumi \rhonpi\Dnot\av{v\gnp_i}$ and $\sumi
\av{v\Dnot\gnp_i} \rhonpi$ in the left and right-hand sides cancel
out and we obtain:
\begin{equation}  \label{eq-enerbis}
\begin{split}
& \frac{1}{2\Dt} \left(  \rnorm{\rhonp}^2-\rnorm{\rhon}^2+\rnorm{\rhonp-\rhon}^2\right) \\
& + \frac{\eps^2}{2\Dt} \left( \gnorm{\gnp}^2-\gnorm{\gn}^2 +
  \gnorm{\gnp-\gn}^2  \right) + \eps²
\innerp{\gnp}{\left(v^+\Dm+v^-\Dp\right)\gn} \\
& =  -\tsigma\gnorm{\gnp}^2 + \sumi \av{v\Dnot\gnp_i} (\rhoni-\rhonpi)\Dx.²
\end{split}
\end{equation}
Now, we use
the following Young inequality:
\begin{equation}  \label{eq-Yrho}
\sumi \av{v\Dnot\gnp_i}(\rhoni-\rhonpi)\Dx \leq \alpha \rnorm{\rhonp-\rhon}^2+\frac{1}{4\alpha}\sumi\av{v\Dnot\gnp_i}^2\Dx.
\end{equation}
Then the $\rhonp-\rhon$ terms cancel out in~(\ref{eq-enerbis}) if $\alpha=\frac{1}{2\Dt}$
and we get 
\begin{equation}  \label{eq-fener}
\begin{split}
& \frac{1}{2\Dt} \left(  \rnorm{\rhonp}^2-\rnorm{\rhon}^2\right)
+ \frac{\eps^2}{2\Dt} \left( \gnorm{\gnp}^2-\gnorm{\gn}^2 +  \gnorm{\gnp-\gn}^2  \right) \\
& + \eps \innerp{\gnp}{\left(v^+\Dm+v^-\Dp\right)\gn} \leq  
-\tsigma\gnorm{\gnp}^2 +\frac{\Dt}{2}\sumi\av{v\Dnot\gnpi}^2\Dx.
\end{split}
\end{equation}

\bigskip
\noindent {\it Step 3.} \\
Here, we work on the inner product of~(\ref{eq-fener}) to show that
the $\gnp-\gn$ terms can also be eliminated. First, we insert $\gnp$
in this inner product to get:
\begin{equation}  \label{eq-vDgg1}
\begin{split}
& \innerp{\gnp}{\left(v^+\Dm+v^-\Dp\right)\gn}  \\
& = \innerp{\gnp}{\left(v^+\Dm+v^-\Dp\right)\gnp} + \innerp{\gnp}{\left(v^+\Dm+v^-\Dp\right)(\gn-\gnp)} \\
& = A+B,
\end{split}
\end{equation}
and we rewrite terms $A$ and $B$ as follows. For $A$, we use the
centered form of the upwind operator (lemma~\ref{lemma:5}) and
the discrete integration by parts of lemma~\ref{lemma:intbp} to get:
\begin{equation}\label{eq-vDgg2} 
\begin{split}
A & = \innerp{\gnp}{v\Dc\gnp} -\frac{\Dx}{2} \innerp{\gnp}{|v|\Dm\Dp\gnp} \\
&  = \frac{\Dx}{2} \innerp{\Dp\gnp}{|v|\Dp\gnp} \\
& = \frac{\Dx}{2} \sumi \av{|v|\left(\Dp\gnpi\right)^2}\Dx.
\end{split}
\end{equation}
For $B$, we also use the discrete integration by parts of
lemma~\ref{lemma:intbp} to get:
\begin{equation}\label{eq-vDgg3} 
\begin{split}
B= -\innerp{\left(v^+\Dp+v^-\Dm\right)\gnp}{\gn-\gnp}.
\end{split}
\end{equation}
Then we apply the inequality of lemma~\ref{lemma:3} to $B$ to get 
\begin{equation}  \label{eq-YB}
|B|\leq \alpha \gnorm{\gnp-\gn}^2+\frac{1}{4\alpha}\gnorm{|v|\Dp\gnp}^2.
\end{equation}
Therefore, using~(\ref{eq-fener}),~(\ref{eq-vDgg1}),~(\ref{eq-vDgg2})
and~(\ref{eq-YB}), we see that the $\gnp-\gn$ terms cancel out
in~(\ref{eq-fener}) if $\alpha=\frac{\eps}{2\Dt}$, and we get
\begin{equation}\label{eq-enera} 
\begin{split}
& \frac{1}{2\Dt} \left(  \rnorm{\rhonp}^2-\rnorm{\rhon}^2\right)
+ \frac{\eps^2}{2\Dt} \left( \gnorm{\gnp}^2-\gnorm{\gn}^2 \right) \\
& + \eps\frac{\Dx}{2} \sumi \av{|v|\left(\Dp\gnpi\right)^2}\Dx 
  - \frac{\Dt}{2}\gnorm{|v|\Dp\gnp}^2   \\
& \leq   - \tsigma\gnorm{\gnp}^2 +\frac{\Dt}{2}\sumi\av{v\Dnot\gnpi}^2\Dx.
\end{split}
\end{equation}

\bigskip
\noindent {\it Step 4.} \\
Now, we show how all the $\Dp \gnp$ and the $\Dnot \gnp$ terms can be
controlled by $\gnorm{\gnp}$. First, note that the term
$\frac{\Dt}{2}\gnorm{|v|\Dp\gnp}^2$ of the left-hand side
of~(\ref{eq-enera}) can be estimated as follows:
\begin{equation}\label{eq-majDg} 
\begin{split}
\frac{\Dt}{2}\gnorm{|v|\Dp\gnp}^2 & =  \frac{\Dt}{2} \sumi \av{|v|^2\left(\Dp\gnpi\right)^2}\Dx \\
& \leq \frac{\Dt}{2} \sumi \av{|v|\left(\Dp\gnpi\right)^2}\Dx,
\end{split}
\end{equation}
since $|v|\leq 1$. Moreover, using lemma~\ref{lemma:2} and a change
of indices shows that the
last term of the right-hand side of~(\ref{eq-enera}) satisfies
\begin{equation}  \label{eq-majDnotg}
\frac{\Dt}{2}\sumi\av{v\Dnot\gnpi}^2\Dx \leq \frac{\Dt}{4}\sumi\av{|v|\left(\Dp\gnpi\right)^2}\Dx.
\end{equation}
Finally, we use these two estimates in~(\ref{eq-enera}) to obtain:
\begin{equation}\label{eq-enerf} 
\begin{split}
& \frac{1}{2\Dt} \left(  \rnorm{\rhonp}^2-\rnorm{\rhon}^2\right)
+ \frac{\eps^2}{2\Dt} \left( \gnorm{\gnp}^2-\gnorm{\gn}^2 \right)   \\
& \leq  - \tsigma\gnorm{\gnp}^2 + \left(\frac{3\Dt}{4}- \eps\frac{\Dx}{2}\right) \sumi \av{|v|\left(\Dp\gnpi\right)^2}\Dx.
\end{split}
\end{equation}
Now, taking the positive part of the factor $(\frac{3\Dt}{4}- \eps\frac{\Dx}{2})$ of the
right-hand side of~(\ref{eq-enerf}), we have the estimate
\begin{equation}\label{eq-enerfb} 
\begin{split}
  \left(\frac{3\Dt}{4}- \eps\frac{\Dx}{2}\right) \sumi \av{|v|\left(\Dp\gnpi\right)^2}\Dx 
& \leq   \left(\frac{3\Dt}{4}- \eps\frac{\Dx}{2}\right)^+ \sumi \av{\left(\Dp\gnpi\right)^2}\Dx \\
& \leq   \left(\frac{3\Dt}{4}- \eps\frac{\Dx}{2}\right)^+ \frac{4}{\Dx^2} \gnorm{\gnp}^2,
\end{split}
\end{equation}
where we have used $|v|\leq 1$ and the estimate of lemma~\ref{lemma:estimDp}.

\bigskip
\noindent {\it Step 5.}  \\
Finally, estimates~(\ref{eq-enerf}) and~(\ref{eq-enerfb}) show that
\begin{equation*}
   \frac{1}{2\Dt} \left(  \rnorm{\rhonp}^2-\rnorm{\rhon}^2\right)
+ \frac{\eps^2}{2\Dt} \left( \gnorm{\gnp}^2-\gnorm{\gn}^2 \right) 
\leq \left( \left(\frac{3\Dt}{4}- \eps\frac{\Dx}{2}\right)^+ \frac{4}{\Dx^2}-\tsigma\right) \gnorm{\gnp}^2.
\end{equation*}
This means that we have the final energy estimate 
\begin{equation*}
  \rnorm{\rhonp}^2+ \eps^2\gnorm{\gnp}^2 \leq \rnorm{\rhon}^2+ \eps^2\gnorm{\gn}^2
\end{equation*}
if $\Dt$ is such that
\begin{equation*}
  \left(\frac{3\Dt}{4}- \eps\frac{\Dx}{2}\right)^+ \frac{4}{\Dx^2}\leq\tsigma.
\end{equation*}
Since $\tsigma\geq 0$, an equivalent condition is $(\frac{3\Dt}{4}- \eps\frac{\Dx}{2})
\frac{4}{\Dx^2}\leq\sigma$, which gives the sufficient condition
\begin{equation*}
  \Dt\leq \frac{\Dx^2\tsigma}{3} + \frac{2}{3}\eps\Dx, 
\end{equation*}
which proves the theorem.

\begin{remark} \label{rem:absorption}
As explained at the beginning of this section, when the absorption
term is non zero and is discretized implicitly, its
contribution $-\sumi \sigma_{A,i}(\rhonpi)^2\Dx$ to the energy estimate
is non-positive and plays no role in the previous analysis. However,
if we use instead an explicit discretization, then our analysis has to be modified. The
difference now is that there is the additional term
$-\sumi\sigma_{A,i}\rhonpi\rhoni\Dx$ in the right-hand side
of~(\ref{eq-ener}). The idea of step 2 can be applied to this term
(replace $\rhon$ by $(\rhon-\rhonp)+\rhonp$ and use a Young inequality) so that the
$\rhonp-\rhon$ terms cancel out, but now with
$\alpha=\frac{1}{2\Dt(1+\sigma_{A,M})}$. The other steps are the same,
except that some coefficients are different. In order to shorten the
paper, these details are left to the reader.
\end{remark}

\section{Error estimates}
\label{sec:acc}

In this section, we simplify the presentation by taking constant total
cross section $\sigma$  and kernel $s\equiv\half$, no absorption
($\sigma_A\equiv 0$), and no source term ($S\equiv 0$). These
assumptions are not restrictive at all, and our analysis could be
directly applied in the general case.

Let $T>0$ be some finite time. For this study, we assume that the exact solution $(\rho,g)$ of~(\ref{eq-mM}) has the
following regularity: 
\begin{equation}\label{eq-reg} 
\begin{split}
& \rho\in C^2([0,T],H^1(\R)) \cap C^0([0,T],H^3(\R)) \\
& g\in C^2([0,T],L^2([-1,1],H^1(\R))) \cap C^0([0,T],L^2([-1,1],H^3(\R))).
\end{split}
\end{equation}
Note that this assumption implies that $g$ is uniformly bounded with
respect to $\eps$, which is quite strong. In particular, this excludes
the case of initial or transition layers. Indeed, if, for instance,
$f$ is not isotropic at $t=0$, then $\rho(t=0)\neq f(t=0)$, and hence
$g(t=0)=\frac{1}{\eps}(f-\rho)|_{t=0}=O(\eps^{-1})$ cannot be uniformly bounded
with respect to $\eps$.

With this assumption, we can obtain the following result.
\begin{theo} \label{theo:error}
If $\Dt$ satisfies the following condition
\begin{equation}  \label{eq-eCFL}
\Dt\leq  \frac{\Dx^2\sigma}{6} + \frac{2}{3}\eps\Dx,
\end{equation}
then for every time $T>0$, there exists a constant $C$ independent of
$\Dt$, $\Dx$, and $\eps$, such
that the numerical solution obtained by scheme~(\ref{eq-scheme})
satisfies the following error estimate
\begin{equation*}
\begin{split}
& \max_{n, n\Dt\leq T}\left(\sumi
|\rho(t_n,x_i)-\rhoni|\Dx
+\eps\sumi\av{|g(t_n,x_{i+\demi},v)-\gni(v)|}\Dx \right) \\
& \qquad \leq C\left((1+\eps^2)\Dt+\Dx^2+\eps\Dx\right).
\end{split}
\end{equation*}

\end{theo}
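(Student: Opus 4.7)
I would follow the consistency-plus-stability route. Define the errors $e^n_i := \rho(t_n,x_i)-\rhoni$ and $h^n_{i+\half}(v) := g(t_n,x_{i+\half},v)-\gni(v)$, and insert the exact (smooth) solution of~(\ref{eq-mM}) into scheme~(\ref{eq-scheme}) to produce truncation errors $\tau^{n+1}_i$ (for the $\rho$-equation) and $\eta^{n+1}_{i+\half}(v)$ (for the $g$-equation). Taylor expansion, using the regularity~(\ref{eq-reg}), yields pointwise bounds
\begin{equation*}
|\tau^{n+1}_i|\leq C(\Dt+\Dx^2), \qquad |\eta^{n+1}_{i+\half}(v)|\leq C\bigl(\Dt+\Dx/\eps+\Dx^2/\eps^2\bigr),
\end{equation*}
where the stiff factors $1/\eps$ and $1/\eps^2$ come respectively from the upwind discretization of $\frac{1}{\eps}(I-\av{.})(v\dx g)$ and the centered discretization of $v\dx\rho/\eps^2$.

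Subtracting scheme~(\ref{eq-scheme}) from the perturbed equations, $(e^n,h^n)$ satisfies exactly the same linear system as $(\rhon,\gn)$ in Section~\ref{sec:stab}, but with $\tau^{n+1}$ and $\eta^{n+1}$ as source terms. I would therefore replay the five steps of Section~\ref{subsec:ener} verbatim on $(e^n,h^n)$, which produces, in the analogue of~(\ref{eq-enerf}), two new right-hand-side contributions $\sumi \tau^{n+1}_i e^{n+1}_i \Dx$ and $\eps^2\innerp{\eta^{n+1}}{h^{n+1}}$. Young's inequality controls the first by $\demi\rnorm{e^{n+1}}^2+\demi\rnorm{\tau^{n+1}}^2$ (the $e^{n+1}$ piece is absorbed into a $(1+C\Dt)$ growth factor), and the second by $\frac{\tsigma}{2}\gnorm{h^{n+1}}^2+\frac{\eps^4}{2\tsigma}\gnorm{\eta^{n+1}}^2$, with the $\tsigma/2$ part paid by the available dissipation $\tsigma\gnorm{h^{n+1}}^2$ on the left of~(\ref{eq-enerf}). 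Because only half of that dissipation remains for closing Step~4, the Step~5 arithmetic produces a twice-stricter parabolic CFL: this is precisely the origin of the $\sigma/6$ coefficient in~(\ref{eq-eCFL}) in place of the $\sigma/3$ of~(\ref{eq-CFL}).

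The key quantitative point is that the $\eps^2$ weighting in the energy functional exactly neutralizes the worst stiffness in $\eta^{n+1}$:
\begin{equation*}
\eps^4\gnorm{\eta^{n+1}}^2 \leq C\bigl((1+\eps^2)\Dt+\eps\Dx+\Dx^2\bigr)^2.
\end{equation*}
Collecting the surviving terms gives a recursion $E^{n+1}\leq (1+C\Dt)E^n+C\Dt\bigl((1+\eps^2)\Dt+\Dx^2+\eps\Dx\bigr)^2$ on $E^n:=\rnorm{e^n}^2+\eps^2\gnorm{h^n}^2$, which closes by discrete Gronwall with vanishing initial error and yields the $\ell^2$-in-space bound $\sqrt{E^n}\leq C\bigl((1+\eps^2)\Dt+\Dx^2+\eps\Dx\bigr)$ for all $n\Dt\leq T$. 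Cauchy--Schwarz, together with spatial localization of the exact solution (implicit in the norms entering the constant $C$ via~(\ref{eq-reg})), converts this $\ell^2$ bound into the $\ell^1$-in-space statement of the theorem.

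The main obstacle is the $1/\eps^2$ stiffness in $\eta^{n+1}$: without the $\eps^2$ weight in the energy functional the argument would blow up as $\eps\to 0$. The micro-macro energy carries exactly the correct scaling to absorb both the $1/\eps$ (upwind) and the $1/\eps^2$ (centered source) stiffnesses, and the cross term $\eps^2\innerp{\eta^{n+1}}{h^{n+1}}$ must be balanced against the half-dissipation rather than naively against a $\gnorm{h^{n+1}}^2/\Dt$ term, which would indeed explode as $\eps\to 0$. Everything else is bookkeeping of the same flavour as in Section~\ref{subsec:ener}.
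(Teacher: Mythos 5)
Your proposal is correct and follows essentially the same route as the paper: insert the exact solution to obtain truncation errors, replay the five-step energy argument on the error system, absorb the $g$-source term with half of the collisional dissipation (which is indeed exactly where the $\sigma/6$ in~(\ref{eq-eCFL}) comes from), and close with a discrete Gronwall recursion in which the $\eps^2$-weighted energy neutralizes the $1/\eps^2$ stiffness of the source. The only quibble is that your pointwise bound on $\eta^{n+1}$ omits an $O(\Dt/\eps^2)$ contribution coming from the time lag of the implicitly treated collision term $-\frac{\sigma}{\eps^2}Lg^{n+1}$, but your final estimate $\eps^4\gnorm{\eta^{n+1}}^2\leq C\bigl((1+\eps^2)\Dt+\eps\Dx+\Dx^2\bigr)^2$ already accommodates it, so nothing breaks.
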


This theorem is proved in the following sections: in
section~\ref{subsec:trunc}, we first derive the truncation error of
the scheme, then in section~\ref{subsec:error}, we apply the same
analysis as for the stability result to prove the theorem.

   \subsection{Truncation error}
   \label{subsec:trunc} 

Let $\ani$ and $\frac{1}{\eps^2}\bni$ be the truncation errors of
scheme~(\ref{eq-scheme}), that is to say the reminders obtained by
inserting the exact solution of~(\ref{eq-mM}) in
relations~(\ref{eq-scheme}):
 \begin{subequations} \label{eq-truncs} 
\begin{align}
& \frac{\rho(t_{n+1},x_i)-\rho(t_n,x_i)}{\Dt} +
\av{v\frac{g(t_{n+1},x_{i+\demi})-g(t_{n+1},x_{i-\demi})}{\Dx}} = \ani
 \label{eq-truncsrho}  \\
& \frac{g(t_{n+1},x_{i+\demi},.)-g(t_n,x_i,.)}{\Dt} \\
& + \frac{1}{\eps\Dx} (I-\av{.})
\left(v^+(g(t_n,x_{i+\demi},v)-g(t_n,x_{i-\demi},v)+v^-(g(t_n,x_{i+\frac{3}{2}},v)-g(t_n,x_{i+\demi},v))\right)
\label{eq-truncsg}  \\
& \qquad\qquad =
- \frac{\sigma}{\eps^2} g(t_{n+1},x_{i+\demi},v)
-\frac{1}{\eps^2}v\frac{\rho(t_n,x_{i+1})-\rho(t_n,x_i)}{\Dx} +\frac{1}{\eps^2}\bni. \nonumber
\end{align} 
 \end{subequations}
We can prove the
following estimate of these truncation errors:
\begin{lemma} \label{lemma:estimab}
There exists a constant $\tilde{C}$
  independent of $\Dt$, $\Dx$ and $\eps$, such that 
\begin{equation*}
  \rnorm{\an}+\gnorm{\bn} \leq \tilde{C}\left( (1+\eps^2)\Dt+\Dx^2+\eps\Dx \right)
\end{equation*}
for every $n$
\end{lemma}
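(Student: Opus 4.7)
The plan is to Taylor-expand each truncation error around a well-chosen reference point, identify the derivatives of the exact solution that appear as coefficients of the $\Dt$, $\Dx^2$, and $\eps\Dx$ factors, and convert the pointwise bounds into the discrete $L^2$ norms~(\ref{eq-rnorm})--(\ref{eq-gnorm}) via the standard estimate $\sumi|f(\x)|^2\Dx\leq C\|f\|_{H^1(\R)}^2$. The regularity assumption~(\ref{eq-reg}) is precisely what keeps the constants in these Taylor expansions bounded \emph{uniformly in $\eps$}.

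For $\ani$, I subtract the continuous macroscopic equation~(\ref{eq-mMrho}) (with $\sigma_A=0$, $S=0$) evaluated at $(\tnp,\x)$ from~(\ref{eq-truncsrho}). The forward Euler quotient contributes $\tfrac{\Dt}{2}\dtt\rho(\tau,\x)$ for some $\tau\in(\tn,\tnp)$, and the centered spatial difference $\av{v(g(\tnp,x_{i+\demi})-g(\tnp,x_{i-\demi}))/\Dx}$ expands as $\av{v\dx g(\tnp,\x)}+\tfrac{\Dx^2}{24}\av{v\partial_{xxx}g(\tnp,\x)}+O(\Dx^4)$. The regularity $\rho\in C^2([0,T],H^1(\R))$ and $g\in C^0([0,T],L^2([-1,1],H^3(\R)))$ bounds these derivatives in $L^2_x$ uniformly in time, giving $\rnorm{\an}\leq C_1(\Dt+\Dx^2)$.

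For $b^n_{i+\demi}$ the $\eps$-scales must be tracked carefully. I write $b^n_{i+\demi}/\eps^2$ as the residual of~(\ref{eq-truncsg}) minus the exact microscopic equation~(\ref{eq-mMg}) evaluated at $(\tn,x_{i+\demi})$. Four error sources appear: (a) a forward Euler error of size $\Dt$ from the discretization of $\dt g$; (b) the upwind spatial term, which by lemma~\ref{lemma:5} equals $\tfrac{1}{\eps}(I-\av{\cdot})[v\Dc g^n-\tfrac{\Dx}{2}|v|\Dm\Dp g^n]$ and produces a numerical-viscosity error $-\tfrac{\Dx}{2\eps}(I-\av{\cdot})(|v|\dxx g)$ of size $\Dx/\eps$ plus $O(\Dx^2/\eps)$; (c) the implicit-in-time discretization of $\sigma Lg/\eps^2$, giving $\sigma L(g^{n+1}-g^n)/\eps^2$ of size $\Dt/\eps^2$; and (d) the centered approximation of $v\dx\rho/\eps^2$ at $x_{i+\demi}$, of size $\Dx^2/\eps^2$ by Taylor. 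Multiplying through by $\eps^2$ yields the respective contributions $\eps^2\Dt$, $\eps\Dx$ (absorbing the smaller $\eps\Dx^2$), $\Dt$, and $\Dx^2$; summing and taking the discrete $L^2$ norm gives $\gnorm{\bn}\leq C_2((1+\eps^2)\Dt+\eps\Dx+\Dx^2)$.

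Adding the two bounds gives the lemma with $\tilde{C}=C_1+C_2$. The main technical point is step~(b): the numerical viscosity introduced by the upwind scheme for kinetic stability is exactly what produces the $\eps\Dx$ contribution in the final error bound, and it is only thanks to~(\ref{eq-reg}), which keeps $\dxx g$ bounded in $L^2_{xv}$ independently of $\eps$, that this term remains controlled as $\eps\to 0$.
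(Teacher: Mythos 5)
Your proposal is correct and follows essentially the same route as the paper's appendix: a term-by-term Taylor expansion of the two residuals with careful bookkeeping of the powers of $\eps$ (yielding exactly the four contributions $\eps^2\Dt$, $\eps\Dx$, $\Dt$, $\Dx^2$ for $b^n$), followed by conversion to the discrete norms via estimates of the form $\sumi|f(x_i)|^2\Dx\leq C\|f\|^2_{H^1}$. The only cosmetic caveat is that, since $g(t,\cdot,v)$ is only $H^3$ in $x$, leading Taylor coefficients such as $\partial_{xxx}g(t_{n+1},x_i)$ are not defined pointwise and should be kept in integral-remainder form and estimated by Cauchy--Schwarz, exactly as the paper does in lemma~\ref{lemma:sf}.
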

  This lemma is proved by using standard simple techniques
  (Taylor-Lagrange formula of different orders). But to simplify the
  paper, the proof---which is a bit long---is given in
  appendix~\ref{app:trunc}.

Now, let $\trhoni$ and $\tgni$ be the convergence errors, that is to
say the sequences defined by
\begin{equation*}
  \trhoni=\rho(t_n,x_i)-\rhoni \quad \text{ and } \quad \tgni(v)=g(t_n,x_{i+\demi},v)-\gni(v).
\end{equation*}
Then these sequences satisfy the "perturbed" scheme, written in the following compact form:
\begin{subequations} \label{eq-tsch} 
\begin{align}
& \frac{\trhonpi-\trhoni}{\Dt} + \Dnot\av{v\tgnp_i} = \anpi \label{eq-tschrho}  \\
& \frac{\tgnpi-\tgni}{\Dt} + \frac{1}{\eps} (I-\av{.})
\left(v^+\Dm+v^-\Dp\right)\tgni = - \frac{\sigma}{\eps^2} \tgnpi 
- \frac{1}{\eps^2}v\,\dnot \trhon_{i+\half}+\frac{1}{\eps^2}\bni,\label{eq-tschg}
\end{align} 
\end{subequations}
with the homogeneous initial data $\tilde{\rho}^0_i=0$ and
$\tilde{g}^0_{i+\demi}=0$ for every $i$. 

   \subsection{Analysis of the convergence error}
   \label{subsec:error}
In this section, we apply the same
analysis as for the stability result to prove that $\trhon$ and $\tgn$
can be controlled by the truncation errors.

\noindent {\it Step 1.} \\
We multiply~(\ref{eq-tschrho}) by $\trhonpi$ and take the sum over $i$
to get
\begin{subequations}
\begin{equation}  \label{eq-tErho}
\frac{1}{2\Dt} \left(  \rnorm{\trhonp}^2-\rnorm{\trhon}^2+\rnorm{\trhonp-\trhon}^2\right)
+ \sumi \trhonpi\Dnot\av{v\tgnp_i}\Dx = \sumi \trhonpi\anpi\Dx.
\end{equation}
Second, we multiply~(\ref{eq-tschg}) by $\tgnpi$, we take the velocity
average, we sum over $i$, which yields
\begin{equation}  \label{eq-tEg}
\begin{split}
& \frac{1}{2\Dt} \left( \gnorm{\tgnp}^2-\gnorm{\tgn}^2 + \gnorm{\tgnp-\tgn}^2  \right)
+ \frac{1}{\eps}\innerp{\tgnp}{(I-\av{.})\left(v^+\Dm+v^-\Dp\right)\tgn} \\
&  = - \frac{\sigma}{\eps^2} \gnorm{\tgnp}^2 - \frac{1}{\eps^2}\sumi
\av{v\tgnpi}\dnot \trhon_{i+\half}\Dx
+\frac{1}{\eps^2}\innerp{\tgnp}{\bn}.
\end{split}
\end{equation}
\end{subequations}
Finally, we add up~(\ref{eq-tErho}) and $\eps^2$ times~(\ref{eq-tEg}), we
use~(\ref{eq-Innp}) and lemma~\ref{lemma:intbp} to get
\begin{equation}  \label{eq-tener1}
\begin{split}
& \frac{1}{2\Dt} \left(
  \rnorm{\trhonp}^2-\rnorm{\trhon}^2+\rnorm{\trhonp-\trhon}^2\right) 
+\sumi \trhonpi\Dnot\av{v\tgnp_i}\Dx \\
& + \frac{\eps^2}{2\Dt} \left( \gnorm{\tgnp}^2-\gnorm{\tgn}^2 +
  \gnorm{\tgnp-\tgn}^2  \right)  + \eps
\innerp{\tgnp}{\left(v^+\Dm+v^-\Dp\right)\tgn} \\
& =  
\sumi \trhoni\anpi\Dx - \sigma\gnorm{\tgnp}^2 + \sumi
\av{v\Dnot\tgnpi}\trhon_{i+\half}\Dx + \innerp{\tgnp}{\bn}.
\end{split}
\end{equation}

\bigskip
\noindent {\it Step 2.} \\
Here, we can copy the steps 2 to 4 of the stability analysis
(see section~\ref{sec:stab}) for relation~(\ref{eq-tener1}). Therefore,
skipping the details, we just give the resulting energy estimate:
\begin{equation}\label{eq-tener} 
\begin{split}
& \frac{1}{2\Dt} \left(  \rnorm{\trhonp}^2-\rnorm{\trhon}^2\right)
+ \frac{\eps^2}{2\Dt} \left( \gnorm{\tgnp}^2-\gnorm{\tgn}^2 \right)  \\
& \leq   - \sigma\gnorm{\tgnp}^2  + \left(\frac{3\Dt}{4}- \eps\frac{\Dx}{2}\right) \sumi
\av{|v|\left(\Dp\tgnpi\right)^2}\Dx \\
& \quad + \sumi \trhoni\anpi\Dx  + \innerp{\tgnp}{\bn}.
\end{split}
\end{equation}

\bigskip
\noindent {\it Step 3.} \\
We estimate the scalar products of the right-hand side
of~(\ref{eq-tener}) by using two
different Young inequalities:
\begin{equation}\label{eq-youngab} 
\begin{split}
& \sumi\trhoni\anpi \Dx \leq\demi\left(\rnorm{\trhonp}^2+\rnorm{\anp}^2\right) \\
& \innerp{\tgnp}{\bn} \leq \frac{\sigma}{2}\gnorm{\tgnp}^2 + \frac{1}{2\sigma}\rnorm{\bn}^2.
\end{split}
\end{equation}
Moreover, by taking the positive part of the factor $(\frac{3\Dt}{4}-
\eps\frac{\Dx}{2})$ in~(\ref{eq-tener}), we have the estimate
\begin{equation}  \label{eq-tenerfb}
\left(\frac{3\Dt}{4}-\eps\frac{\Dx}{2}\right)\sumi\av{|v|\left(\Dp\tgnpi\right)^2}\Dx
\leq \left(\frac{3\Dt}{4}-\eps\frac{\Dx}{2}\right)^+\frac{4}{\Dx^2}\gnorm{\tgnp}^2.
\end{equation}
Now, we can use the inequalities~(\ref{eq-youngab})
and~(\ref{eq-tenerfb}) to obtain the energy estimate
\begin{equation}  \label{eq-tenerdiss}
\begin{split}
& \rnorm{\trhonp}^2+\eps^2\gnorm{\tgnp}^2 \\ 
& \quad \leq (1+\Dt)\rnorm{\trhon}^2+\eps^2\gnorm{\tgn}^2 
+\left(-\frac{\sigma}{2}+\frac{4}{\Dx^2}\left(\frac{3\Dt}{4}-\eps\frac{\Dx}{2}\right)^+\right)2\Dt\gnorm{\tgnp}^2
\\
& \qquad + \Dt\rnorm{\anp}^2 + \frac{\Dt}{\sigma}\gnorm{\bn}^2.
\end{split}
\end{equation}
Note that the factor of $\gnorm{\gnp}^2$ in the right-hand side
of~(\ref{eq-tenerdiss}) is non-positive if
$\frac{4}{\Dx^2}\left(\frac{3\Dt}{4}-\eps\frac{\Dx}{2}\right)^+$ $\leq
  \frac{\sigma}{2}$. Since $\sigma>0$, an equivalent condition is $\frac{4}{\Dx^2}\left(\frac{3\Dt}{4}-\eps\frac{\Dx}{2}\right)\leq
  \frac{\sigma}{2}$, which gives the sufficient condition
\begin{equation*}
  \Dt\leq \frac{\Dx^2\sigma}{6} + \frac{2}{3}\eps\Dx,
\end{equation*}
which is a bit more restrictive than the CFL
condition~(\ref{eq-CFL}). In that case, the energy
estimate~(\ref{eq-tenerdiss}) can be simplified in
\begin{equation*}
\begin{split}
& \rnorm{\trhonp}^2+\eps^2\gnorm{\tgnp}^2 \\ 
& \quad \leq (1+\Dt) \left(
  \rnorm{\trhon}^2+\eps^2\gnorm{\tgn}^2\right) + \Dt C_{\sigma}\left(\rnorm{\anp}^2 +\gnorm{\bn}^2\right),
\end{split}
\end{equation*}
where $C_{\sigma}=1+\frac{1}{\sigma}$. 

Now, by using a simple recursion, we obtain
\begin{equation*}
\begin{split}
& \rnorm{\trhon}^2+\eps^2\gnorm{\tgn}^2 \\ 
& \quad \leq (1+\Dt)^n \left(
  \rnorm{\tilde{\rho}^0}^2+\eps^2\gnorm{\tilde{g}^0}^2  \right) \\
& \quad + \Dt C_{\sigma}\sum_{k=1}^n\left( \rnorm{a^{n-k}}^2+\gnorm{b^{n-k}}^2 \right)(1+\Dt)^{k-1}.
\end{split}
\end{equation*}
If $n$ is such that $n\Dt\leq T$, we can use the classical inequality
$(1+\Dt)^n\leq e^{n\Dt}\leq e^T$. Moreover, we can use
lemma~\ref{lemma:estimab} and the fact that
$\tilde{\rho}^0=\tilde{g}^0=0$ to get
\begin{equation*}
\begin{split}
& \rnorm{\trhon}^2+\eps^2\gnorm{\tgn}^2 \\ 
& \quad \leq e^TC_{\sigma}T\tilde{C}\left( (1+\eps^2)\Dt+\Dx^2+\eps\Dx \right)^2,
\end{split}
\end{equation*}
and hence
\begin{equation*}
\begin{split}
& \rnorm{\trhon}+\eps\gnorm{\tgn} \\ 
& \quad \leq C\left( (1+\eps^2)\Dt+\Dx^2+\eps\Dx \right),
\end{split}
\end{equation*}
where $C=e^{\frac{T}{2}}\sqrt{C_{\sigma}T\tilde{C}}$ is independent of $\Dt$, $\Dx$,
and $\eps$. This concludes the proof of the theorem.

\section{Conclusion}
\label{sec:concl}

In this paper, we have proposed a very simple stability proof for the
recent AP scheme of~\cite{LM2007}. An explicit CFL condition that can
be used in a computational code has been found: it insures that the
scheme is stable and accurate, independently of $\eps$. This condition
gives a classical parabolic CFL condition $\Dt=O(\Dx^2)$ when $\eps$
goes to 0. Our proof uses very basic and simple arguments (energy
estimates and Young inequalities), and is valid for one-dimensional
linear equations with non-constant coefficients, continuous velocity
variable, in the whole space. Our technique applies to general linear
collision operators like operators of neutron transport or linear radiative
transfer. By the same technique, we have also proved uniform error
estimates. We mention that, in a work in preparation~\cite{LM_nonlin},
we are able to apply our method to a simple non-linear
problem coming from radiative transfer.

In the future, the analysis of this scheme for initial boundary-value
problems will be investigated. It would also be important to extend the
scheme and its analysis to 2 or 3 dimensional problems.

\bibliographystyle{plain}

\bibliography{biblio}

\begin{thebibliography}{10}

\bibitem{BSS}
C.~Bardos, R.~Santos, and R.~Sentis.
\newblock Diffusion approximation and computation of the critical size.
\newblock {\em Trans. Amer. Math. Soc.}, 284(2):617--649, 1984.

\bibitem{BLM2008}
M.~Bennoune, M.~Lemou, and L.~Mieussens.
\newblock Uniformly stable numerical schemes for the {B}oltzmann equation
  preserving the compressible {N}avier-{S}tokes asymptotics.
\newblock {\em J. Comput. Phys.}, 227(8):3781--3803, 2008.

\bibitem{BLM2009}
M.~Bennoune, M.~Lemou, and L.~Mieussens.
\newblock An asymptotic preserving scheme for the {K}ac model of the boltzmann
  equation in the diffusion limit.
\newblock To appear in Contin. Mech. Thermodyn., 2009.

\bibitem{BCLM}
C.~Buet, S.~Cordier, B.~Lucquin-Desreux, and S.~Mancini.
\newblock Diffusion limit of the {L}orentz model: asymptotic preserving
  schemes.
\newblock {\em M2AN Math. Model. Numer. Anal.}, 36(4):631--655, 2002.

\bibitem{CGL2008}
J.~A. Carrillo, T.~Goudon, and P.~Lafitte.
\newblock Simulation of fluid and particles flows: asymptotic preserving
  schemes for bubbling and flowing regimes.
\newblock {\em J. Comput. Phys.}, 227(16):7929--7951, 2008.

\bibitem{CGLV2008}
J.~A. Carrillo., T.~Goudon, P.~Lafitte, and F.~Vecil.
\newblock Numerical schemes of diffusion asymptotics and moment closures for
  kinetic equations.
\newblock {\em J. Sci. Comput.}, 36(1):113--149, 2008.

\bibitem{DLM2006}
P.~Degond, J.-G. Liu, and L.~Mieussens.
\newblock Macroscopic fluid models with localized kinetic upscaling effects.
\newblock {\em Multiscale Modeling \& Simulation}, 5(3):940--979, 2006.

\bibitem{GT2004}
L.~Gosse and G.~Toscani.
\newblock Asymptotic-preserving \& well-balanced schemes for radiative transfer
  and the {R}osseland approximation.
\newblock {\em Numer. Math.}, 98(2):223--250, 2004.

\bibitem{jin_sisc_1999}
S.~Jin.
\newblock Efficient asymptotic-preserving ({AP}) schemes for some multiscale
  kinetic equations.
\newblock {\em SIAM J. Sci. Comput.}, 21(2):441--454, 1999.

\bibitem{JL1991}
S.~Jin and C.~D. Levermore.
\newblock The discrete-ordinate method in diffusive regimes.
\newblock {\em Transport Theory Statist. Phys.}, 20(5-6):413--439, 1991.

\bibitem{JL1993}
S.~Jin and C.~D. Levermore.
\newblock Fully discrete numerical transfer in diffusive regimes.
\newblock {\em Transport Theory Statist. Phys.}, 22(6):739--791, 1993.

\bibitem{JP2000}
S.~Jin and L.~Pareschi.
\newblock Discretization of the multiscale semiconductor {B}oltzmann equation
  by diffusive relaxation schemes.
\newblock {\em J. Comput. Phys.}, 161(1):312--330, 2000.

\bibitem{JP2000b}
S.~Jin and L.~Pareschi.
\newblock Asymptotic-preserving ({AP}) schemes for multiscale kinetic
  equations: a unified approach.
\newblock In {\em Hyperbolic problems: theory, numerics, applications, Vol. I,
  II (Magdeburg, 2000)}, volume 141 of {\em Internat. Ser. Numer. Math., 140},
  pages 573--582. Birkh\"auser, Basel, 2001.

\bibitem{JPT1998}
S.~Jin, L.~Pareschi, and G.~Toscani.
\newblock Diffusive relaxation schemes for multiscale discrete-velocity kinetic
  equations.
\newblock {\em SIAM J. Numer. Anal.}, 35(6):2405--2439, 1998.

\bibitem{JPT2000}
S.~Jin, L.~Pareschi, and G.~Toscani.
\newblock Uniformly accurate diffusive relaxation schemes for multiscale
  transport equations.
\newblock {\em SIAM J. Numer. Anal.}, 38(3):913--936, 2000.

\bibitem{klar_sinum_1998}
A.~Klar.
\newblock An asymptotic-induced scheme for nonstationary transport equations in
  the diffusive limit.
\newblock {\em SIAM J. Numer. Anal.}, 35(3):1073--1094, 1998.

\bibitem{klar_sinum_1999}
A.~Klar.
\newblock An asymptotic preserving numerical scheme for kinetic equations in
  the low {M}ach number limit.
\newblock {\em SIAM J. Numer. Anal.}, 36(5):1507--1527, 1999.

\bibitem{klar_sisc_1999}
A.~Klar.
\newblock A numerical method for kinetic semiconductor equations in the
  drift-diffusion limit.
\newblock {\em SIAM J. Sci. Comput.}, 20(5):1696--1712, 1999.

\bibitem{KS2001}
A.~Klar and C.~Schmeiser.
\newblock Numerical passage from radiative heat transfer to nonlinear diffusion
  models.
\newblock {\em Math. Models Methods Appl. Sci.}, 11(5):749--767, 2001.

\bibitem{KU2002}
A.~Klar and A.~Unterreiter.
\newblock Uniform stability of a finite difference scheme for transport
  equations in diffusive regimes.
\newblock {\em SIAM J. Numer. Anal.}, 40(3):891--913, 2002.

\bibitem{LM1989}
A.~W. Larsen and J.~E. Morel.
\newblock Asymptotic solutions of numerical transport problems in optically
  thick, diffusive regimes. {II}.
\newblock {\em J. Comput. Phys.}, 83(1):212--236, 1989.

\bibitem{LMM1987}
A.~W. Larsen, J.~E. Morel, and W.~F.~Miller Jr.
\newblock Asymptotic solutions of numerical transport problems in optically
  thick, diffusive regimes.
\newblock {\em J. Comput. Phys.}, 69(2):283--324, 1987.

\bibitem{LM2007}
M.~Lemou and L.~Mieussens.
\newblock A new asymptotic preserving scheme based on micro-macro formulation
  for linear kinetic equations in the diffusion limit.
\newblock {\em SIAM J. Sci. Comput.}, 31(1):334--368, 2008.

\bibitem{LM_nonlin}
J.-G. Liu and L.~Mieussens.
\newblock Analysis of an asymptotic preserving scheme for the non-linear
  radiative transfer equation.
\newblock in preparation.

\bibitem{NP1998}
G.~Naldi and L.~Pareschi.
\newblock Numerical schemes for kinetic equations in diffusive regimes.
\newblock {\em Appl. Math. Lett.}, 11(2):29--35, 1998.

\end{thebibliography}

\appendix
\section{Estimate of the truncation errors: proof of lemma~\ref{lemma:estimab}}
\label{app:trunc}

First, we remind (with no proof) that standard Taylor expansions give the following estimates for some time finite
differences.
\begin{lemma} \label{lemma:tf}
  \begin{itemize}
  \item[(i)]
  If $\psi\in C^1([0,T])$, then 
\begin{equation*}
  |\psi(\tnp)-\psi(\tn)|\leq \Dt \max_{[0,T]}|\psi'|.
\end{equation*}
  \item[(ii)]
If $\psi\in C^2([0,T])$, then 
\begin{equation*}
  |\frac{\psi(\tnp)-\psi(\tn)}{\Dt}-\psi'(\tn)|\leq \Dt \max_{[0,T]}|\psi''|.
\end{equation*}
  \end{itemize}
\end{lemma}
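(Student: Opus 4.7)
The lemma collects two elementary consequences of Taylor's theorem on a subinterval of length $\Dt$; the authors explicitly mark them as reminders without proof, so my plan is simply to recall the standard derivations.

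For part (i), I would use the fundamental theorem of calculus. Since $\psi \in C^1([0,T])$, we have $\psi(\tnp) - \psi(\tn) = \int_{\tn}^{\tnp} \psi'(s)\, ds$. Taking absolute values inside the integral then gives $|\psi(\tnp) - \psi(\tn)| \leq \int_{\tn}^{\tnp} |\psi'(s)|\, ds \leq (\tnp - \tn)\max_{[0,T]}|\psi'| = \Dt \max_{[0,T]}|\psi'|$, which is the claimed bound.

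For part (ii), I would use Taylor's formula with integral remainder at order two, valid under $\psi \in C^2([0,T])$:
\begin{equation*}
\psi(\tnp) = \psi(\tn) + \Dt\, \psi'(\tn) + \int_{\tn}^{\tnp}(\tnp - s)\,\psi''(s)\, ds.
\end{equation*}
Dividing by $\Dt$ and subtracting $\psi'(\tn)$ expresses the difference quotient minus $\psi'(\tn)$ as $\frac{1}{\Dt}\int_{\tn}^{\tnp}(\tnp - s)\,\psi''(s)\, ds$, whose absolute value is bounded by $\frac{1}{\Dt}\cdot \frac{\Dt^2}{2}\cdot\max_{[0,T]}|\psi''| = \frac{\Dt}{2}\max_{[0,T]}|\psi''|$, which is in particular at most $\Dt\,\max_{[0,T]}|\psi''|$.

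There is no real obstacle. The only minor design choice is whether to use the Lagrange (pointwise) form of the remainder or the integral form — I would prefer the integral form, since it requires only continuity of $\psi''$ (rather than anything stronger at the endpoint $\tn$) and yields the stated constant directly. The bound in (ii) is in fact a factor of two loose; the authors presumably write $\Dt$ rather than $\Dt/2$ only for cosmetic simplicity, since these constants are ultimately absorbed into the generic $\tilde{C}$ appearing in lemma~\ref{lemma:estimab}.
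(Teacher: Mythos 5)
Your proof is correct, and it follows exactly the route the paper indicates (the lemma is stated there as a reminder obtained from ``standard Taylor expansions,'' with no written proof): the fundamental theorem of calculus for (i) and Taylor's formula with integral remainder for (ii). Your observation that (ii) actually holds with the sharper constant $\frac{\Dt}{2}$ is also accurate; the paper's looser constant is harmless since it is absorbed into $\tilde{C}$ in lemma~\ref{lemma:estimab}.
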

Then we also have the following estimates for some space finite
differences.
\begin{lemma} \label{lemma:sf}
  \begin{itemize}
  \item[(i)] If $\phi\in H^1(\R)$ and $\Dx\leq 1$, then 
\begin{equation*}
  \sumi\phi(x_i)^2 \Dx \leq 2\|\phi\|^2_{H^1}.
\end{equation*}
  \item[(ii)] If $\phi\in H^2(\R)$, then 
\begin{equation*}
  \sumi|\frac{\phi(x_{i+1})-\phi(x_i)}{\Dx}-\phi'(x_i)|^2 \Dx \leq
  \frac{\Dx^2}{3}\|\phi''\|^2_{L^2}.
\end{equation*}
  \item[(iii)] If $\phi\in H^3(\R)$, then  
\begin{equation*}
  \sumi|\frac{\phi(x_{i+1})-\phi(x_i)}{\Dx}-\phi'(x_{i+\demi})|^2 \Dx
  \leq \frac{\Dx^4}{320}\|\phi'''\|^2_{L^2}.
\end{equation*}
  \end{itemize}
\end{lemma}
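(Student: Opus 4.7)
The plan is to prove the three inequalities separately, each by a Taylor-type argument of the appropriate order with integral remainder, followed by Cauchy–Schwarz and a telescoping sum over $i$.

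For part (i), I would write, for any $t\in[0,\Dx]$, $\phi(\x)=\phi(\x+t)-\int_0^t\phi'(\x+s)\,ds$, square both sides and use $(a-b)^2\leq 2a^2+2b^2$ together with Cauchy–Schwarz on the integral, giving $\phi(\x)^2\leq 2\phi(\x+t)^2+2t\int_0^t\phi'(\x+s)^2\,ds$. Integrating in $t$ from $0$ to $\Dx$ and then summing over $i\in\Z$, the translated integrals recombine into $\|\phi\|_{L^2}^2$ and $\|\phi'\|_{L^2}^2$, yielding $\sumi\phi(\x)^2\Dx\leq 2\|\phi\|_{L^2}^2+\Dx^2\|\phi'\|_{L^2}^2$. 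The assumption $\Dx\leq 1$ then replaces $\Dx^2$ by $2$ and the bound $2\|\phi\|_{H^1}^2$ follows.

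For part (ii), the second-order Taylor formula with integral remainder gives $\phi(\xp)=\phi(\x)+\Dx\phi'(\x)+\int_{\x}^{\xp}(\xp-s)\phi''(s)\,ds$, so $\frac{\phi(\xp)-\phi(\x)}{\Dx}-\phi'(\x)=\frac{1}{\Dx}\int_{\x}^{\xp}(\xp-s)\phi''(s)\,ds$. Squaring and applying Cauchy–Schwarz to the right-hand side yields a factor $\int_{\x}^{\xp}(\xp-s)^2\,ds=\Dx^3/3$, and after multiplying by $\Dx$ and summing, the telescoping of the intervals $[\x,\xp]$ produces $\|\phi''\|_{L^2}^2$ with prefactor $\Dx^2/3$.

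Part (iii) is the key step and where the main difficulty lies, because we must exploit the extra order of accuracy of the centered (midpoint) approximation and we must obtain the sharp constant $1/320$. Set $h=\Dx/2$ and $\psi(t)=\phi(x_{i+\demi}+t)$. I would apply third-order Taylor with integral remainder at $t=0$ in both directions, $\psi(\pm h)=\psi(0)\pm h\psi'(0)+\tfrac{h^2}{2}\psi''(0)\pm\tfrac{1}{2}\int_0^{\pm h}(\pm h-s)^2\psi'''(s)\,ds$, noting that the $\psi(0)$ and $\psi''(0)$ terms cancel in $\psi(h)-\psi(-h)$ while the $\psi'(0)$ terms combine to $2h\psi'(0)=\Dx\phi'(x_{i+\demi})$. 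Dividing by $\Dx$, the error is the integral $\frac{1}{4h}\int_{-h}^{h}K(s)\psi'''(s)\,ds$ with kernel $K(s)=(h-|s|)^2$. Cauchy–Schwarz then requires computing $\int_{-h}^{h}K(s)^2\,ds=2\int_0^h(h-s)^4\,ds=2h^5/5$, which produces exactly $\frac{h^3}{40}\int_{x_i}^{x_{i+1}}\phi'''^2$ per interval; replacing $h$ by $\Dx/2$ gives $\Dx^3/320$ per interval, and multiplying by $\Dx$ and summing yields the claimed $\frac{\Dx^4}{320}\|\phi'''\|_{L^2}^2$. The principal care is keeping track of the kernel on $[-h,0]$ (where it equals $(h+s)^2=(h-|s|)^2$) so that cancellation of the even-order Taylor terms is used correctly and the constant $1/320$ is obtained.
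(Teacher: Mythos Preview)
Your proof is correct and follows essentially the same strategy as the paper's: Taylor expansions with integral remainder, Cauchy--Schwarz on the remainder, and telescoping over $i$. For part (i) your argument is a minor variant (you square a first-order expansion of $\phi$ and integrate in the auxiliary variable, whereas the paper compares the discrete and continuous $L^2$ norms via $\frac{d}{dy}\phi^2$), while parts (ii) and (iii) are virtually identical to the paper's, including the computation $\int_{-h}^{h}(h-|s|)^4\,ds=2h^5/5$ that produces the sharp constant $1/320$.
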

\begin{proof}
  For (i), we write the difference of the continuous and discrete
  $L^2$ norms of $\phi$ as
\begin{equation*}
\begin{split}
  & \int_\R\phi(x)^2\, dx - \sumi\phi(x_i)^2 \Dx  \\
&  =  \sumi\int_{\x}^{\xp}(\phi(x)^2-\phi(\x)^2)\, dx 
 = \sumi\int_{\x}^{\xp}\left(\int_{\x}^x\frac{d}{dy}(\phi(y)^2)\,
  dy\right)dx \\
& = \sumi\int_{\x}^{\xp}\left(\int_{\x}^x 2\phi'(y)\phi(y)\, dy\right)dx . 
\end{split}
\end{equation*}
Then, using a simple Young inequality, we get:
\begin{equation*}
\begin{split}
\sumi\phi(x_i)^2 \Dx & 
\leq \int_\R\phi(x)^2\, dx +\sumi\int_{\x}^{\xp}\left(\int_{\x}^x
  \left(\phi'(y)^2+\phi(y)^2\right)\, dy\right)dx  \\
& \leq \int_\R\phi(x)^2\, dx +\sumi\int_{\x}^{\xp}\left(\int_{\x}^{\xp}
  \left(\phi'(y)^2+\phi(y)^2\right)\, dy\right)dx  \\
& =
\|\phi\|^2_{L^2}+\Dx\left(\|\phi'\|^2_{L^2}+\|\phi\|^2_{L^2}\right) \\
& \leq (1+\Dx)\|\phi\|^2_{H^1}.
\end{split}
\end{equation*}

For (ii), we use the Taylor-Lagrange formula up to first order to get
\begin{equation*}
  \frac{\phi(x_{i+1})-\phi(x_i)}{\Dx}-\phi'(x_i)=-\frac{1}{\Dx}\int_{\x}^{\xp}(x-\xp)\phi''(x)\, dx.
\end{equation*}
Then a simple Cauchy-Schwarz inequality gives the following:
\begin{equation*}
  \left|\frac{\phi(x_{i+1})-\phi(x_i)}{\Dx}-\phi'(x_i)\right|^2\leq \frac{\Dx}{3}\int_{\x}^{\xp}\phi''(x)^2\, dx.
\end{equation*}
Finally, we get the result by multiplying by $\Dx$ and taking the sum
over $i\in\Z$.

For (iii), we use the Taylor-Lagrange formula up to second order to
get the following two relations:
\begin{equation*}
\begin{split}
&\phi(\xp)-\phi(\xpd)=\demi\phi'(\xpd)+\frac{\Dx^2}{8}\phi''(\xpd)+\int_{\xpd}^{\xp}\frac{(x-\xp)^2}{2\Dx}\phi'''(x)\,dx
,\\
&\phi(\x)-\phi(\xpd)=-\demi\phi'(\xpd)+\frac{\Dx^2}{8}\phi''(\xpd)-\int_{\x}^{\xpd}\frac{(x-\x)^2}{2\Dx}\phi'''(x)\,dx.
\end{split}
\end{equation*}
Then, we take the difference of these relations to obtain:
\begin{equation*}
\frac{\phi(x_{i+1})-\phi(x_i)}{\Dx}-\phi'(x_{i+\demi}) =
\int_{\xpd}^{\xp}\frac{(x-\xp)^2}{2\Dx}\phi'''(x)\,dx + 
\int_{\x}^{\xpd}\frac{(x-\x)^2}{2\Dx}\phi'''(x)\,dx.
\end{equation*}
Now, using a Young then a Cauchy-Schwarz inequalities gives
\begin{equation*}
\left|  \frac{\phi(x_{i+1})-\phi(x_i)}{\Dx}-\phi'(x_{i+\demi})
\right|^2
\leq \frac{\Dx^3}{320}\int_{\x}^{\xp}\phi'''(x)^2\, dx.
\end{equation*}
Again, the final result is obtained by multiplying by $\Dx$ and taking the sum
over $i\in\Z$.
\end{proof}

Now we study the truncation error $\ani$ which is defined
by~(\ref{eq-truncsrho}). Since $(\rho,g)$ is the exact solution, we
have:
\begin{equation*}
  \ani= \left(\frac{\rho(t_{n+1},x_i)-\rho(t_n,x_i)}{\Dt}-\dt \rho(t_n,x_i)\right) +
\av{v\left(\frac{g(t_{n+1},x_{i+\demi})-g(t_{n+1},x_{i-\demi})}{\Dx}-\dx g(t_{n+1},x_i)\right)}
\end{equation*}
Then we estimate the norm of $\an$ as follows: we use a Young inequality,
lemma~\ref{lemma:tf}, (iii) of lemma~\ref{lemma:sf}, and
lemma~\ref{lemma:2}, and we obtain
\begin{equation*}
\sumi|\ani|^2\Dx\leq 2 
\left( 
\sumi  \Dt^2\max_{[0,T]}\left|\partial_{tt}\rho(t,x_i)\right|^2\Dx
+\demi\av{\frac{\Dx^4}{320}\left\|\partial_{xxx}g(\tnp,.,v)\right\|^2_{L^2}}
\right).
\end{equation*}
By using (i) of lemma~\ref{lemma:sf}, the first term of the right-hand
side of the previous estimate can be controlled by a norm of $\rho$
(if $\Dx\leq 1$) and we get
\begin{equation*}
\sumi|\ani|^2\Dx\leq 2 
\left( 2\Dt^2\|\rho\|^2_{X}
+\frac{1}{4}\frac{\Dx^4}{320}\|g\|^2_{\cal Y}
\right), 
\end{equation*}
where $X=C^2([0,T],H^1(\R))$ and ${\cal  Y}=C^0([0,T],L^2([-1,1],H^3(\R)))$.
Consequently, it is clear that $\rnorm{\an}\leq
\tilde{C}_1(\Dt+\Dx^2)$, where $\tilde{C}_1$ depends only on the norms
of $\rho$ and $g$.

Now, we study the truncation error $\bni$ which is defined by~(\ref{eq-truncsg}).
Again, since $(\rho,g)$ is the exact solution, we
have:
\begin{equation*}
\begin{split}
\bni  = & \, \eps^2\left(\frac{g(t_{n+1},x_{i+\demi},v)-g(t_n,x_i,v)}{\Dt}-\dt g(t_n,x_i,v)\right) \\
& + \eps(I-\av{.})\left(
v^+\left(\frac{g(t_n,x_{i+\demi},v)-g(t_n,x_{i-\demi},v)}{Dx}-\dx
  g(t_n,x_{i-\demi},v)\right)
\right.\\
& \hspace{13ex}\left. +v^-\left(\frac{g(t_n,x_{i+\frac{3}{2}},v)-g(t_n,x_{i+\demi},v)}{Dx}-\dx g(t_n,x_{i+\demi},v)\right)
                  \right) \\
& + \sigma\left(g(t_{n+1},x_{i+\demi},v)-g(t_{n},x_{i+\demi},v)\right) \\
& +v\left(\frac{\rho(t_n,x_{i+1})-\rho(t_n,x_i)}{\Dx}-\dx
  \rho(t_n,x_{i+\demi})\right) 
\end{split}
\end{equation*}
The sequel is similar, though a but longer, to what we did for
$\ani$. By using again lemmas~\ref{lemma:tf} and~\ref{lemma:sf}, and some
standard inequalities (Young and Jensen), the reader can easily find
that the following estimate is satisfied:
\begin{equation*}
\sumi\av{|\bni|^2}\Dx \leq 4\left(
\eps^42\Dt^2\|g\|^2_{\cal X} 
+ \eps^22\frac{4}{3}\Dx^2\|g\|^2_{\cal Y} 
+\sigma^22\Dt^2\|g\|^2_{\cal X} 
+\frac{\Dx^4}{5\times 2^6}\|\rho\|^2_{Y} 
\right),
\end{equation*}
where ${\cal
  X}=C^2([0,T],L^2([-1,1],H^1(\R)))$ and $Y=C^0([0,T],H^3(\R))$.
Therefore, we have $\gnorm{\bn}\leq \tilde{C}_2\left(
  (1+\eps^2)\Dt+\eps\Dx+\Dx^2 \right)$, where $C_2$ depends only on
the norms of $\rho$ and $g$. Finally,
the estimates of $\rnorm{\an}$ and $\gnorm{\bn}$ allow us to complete
the proof.

\end{document}